\newtheorem{theorem}{Theorem}[section]
\newtheorem{lemma}[theorem]{Lemma}
\newtheorem{corollary}[theorem]{Corollary}
\newcommand{\R}{\mathbb{R}}					
\newcommand{\st}{\mbox{s.t.}}				
\newcommand{\suml}{\sum\limits}			
\renewcommand{\d}{\mathrm d}
\newcommand{\D}{\mathrm D}
\renewcommand{\div}{\mathrm{div}}
\newcommand{\hMOO}{g_1^{\mathrm{OMO}}}	
\newcommand{\hMOOhat}{\hat{g}_1^{\mathrm{OMO}}}	
\newcommand{\rlam}{\mathfrak{P}}	
\newcommand{\tini}{0}	
\newcommand{\tend}{T}
\newcommand{\crtlone}{u}
\newcommand{\stateone}{x}
\newcommand{\objone}{J}
\newcommand{\regone}{\alpha}
\newcommand{\dynone}{G}
\newcommand{\lagrOMone}{\lambda}
\newcommand{\lagrMOone}{\Lambda}
\newcommand{\fOMone}{h^{\mathrm{OM}}}
\newcommand{\fMOone}{h^{\mathrm{MO}}}
\newcommand{\crtltwo}{u}
\newcommand{\statetwo}{x}
\newcommand{\objtwo}{J}
\newcommand{\regtwo}{\alpha}
\newcommand{\dyntwo}{G}
\newcommand{\lagrOMtwo}{\lambda}
\newcommand{\lagrMOtwo}{\Lambda}
\newcommand{\fOMtwo}{f^{\mathrm{OM}}}
\newcommand{\fOMtwohat}{\hat{f}^{\mathrm{OM}}}
\newcommand{\fMOtwo}{f^{\mathrm{MO}}}
\newcommand{\fMOtwohat}{\hat{f}^{\mathrm{MO}}}
\newcommand{\crtllead}{v}
\newcommand{\statelead}{\zeta}
\newcommand{\objlead}{J^L}
\newcommand{\reglead}{\beta}
\newcommand{\lagrMOlead}{\Phi}
\newcommand{\crtlfoll}{w}
\newcommand{\statefoll}{\xi}
\newcommand{\objfoll}{J^F}
\newcommand{\regfoll}{\gamma}
\newcommand{\dynfoll}{P}
\newcommand{\lagrOMfoll}{\psi}
\newcommand{\lagrMOfoll}{\Psi}
\newcommand{\fOOM}{g^{\mathrm{OOM}}}
\newcommand{\fOOMhat}{\hat{g}^{\mathrm{OOM}}}
\newcommand{\fMOO}{g^{\mathrm{MOO}}}
\newcommand{\fMOOhat}{\hat{g}^{\mathrm{MOO}}}
\newcommand{\fOMO}{g^{\mathrm{OMO}}}
\newcommand{\fOMOhat}{\hat{g}^{\mathrm{OMO}}}
\begin{document}

\title{Multiscale Control of Stackelberg Games}
\author{Michael Herty\footnote{herty@igpm.rwth-aachen.de, RWTH Aachen University, Germany}, Sonja Steffensen\footnote{steffensen@igpm.rwth-aachen.de, RWTH Aachen University, Germany}, and Anna Th\"unen\footnote{thuenen@igpm.rwth-aachen.de, RWTH Aachen University, Germany}}

\date{\today}
\maketitle

\begin{abstract}

We present a linear--quadratic Stackelberg game with a large number of followers and we also derive the mean field limit of infinitely many followers.
The relation between optimization and mean-field limit is studied and
conditions for consistency are established. 
Finally, we propose a numerical method based on the derived models and present numerical results.

\paragraph{Keywords:} Multi-level Game, Multiscale Control, Stackelberg Game, Nash Equilibrium, Mean-Field Game

\paragraph{MSC(2020):} 82B40 
				       91A65 
        		       49N80 
        		       91A16 

\end{abstract}

\section{Introduction}
Game theory extents classical optimization  by allowing for competing goals of possibly many actors.
Early considerations and economic applications are described in~\cite{Neumann2007}.
A theoretical breakthrough has been made by Nash by formalizing  the concept of equilibrium~\cite{Nash1951}.
Stackelberg extended the models by putting one player in an special position, called the leader~\cite{Stackelberg2011}, establishing the class of Stackelberg games.
In the last decades such multilevel games served as a tool for the analysis of systems of multiple competing interests and hierarchies.
A prominent application is the analysis of electricity markets~\cite{Allevi2017, Henrion2012, Hu2007} using multi-leader follower games.
Other applications include traffic and tolling~\cite{Harks2019,Koh2010} as well as telecommunication~\cite{Nowak2018,Wang2008}.

These applications usually involve modeling large populations of followers.
For example, the demand of all customers for electricity is represented by one single independent system operator (ISO), which currently also provides a precise model for the current practice in energy markets, e.g.~\cite{Aussel2016, Aussel2016a,Aussel2015}.
Many commuters in tolling models are modeled as one unit seeking a Wardrop equilibrium, whereby the interaction of these units does not play a role in road traffic, e.g.~\cite{Harks2019}.
Similarly, in~\cite{Wang2008} the internet providers are modeled as individual leaders but  data traffic is not further adressed.

We are interested in the study of Stackelberg games under possibly infinitely many followers.
Models of interacting agents or followers have been studied e.g. in~\cite{Bellomo2017, Cristiani2014, Pareschi2014}.
In particular, opinion formation and consensus as social models are discussed in~\cite{Hegselmann2002,Motsch2014}.
Other applications include economic and financial market models \cite{Naldi2010} as well as traffic models~\cite{Herty2020}.
Game theoretic foundations of the analysis of these interacting agent systems discussed e.g.~\cite{Lasry2007}.
In \cite{Albi2016} the control of a two-population model is investigated, where a population with a leading role is modeled through the dynamics.
The agents of this leading population are however not leaders in the sense of game theory.

In this paper we consider a Stackelberg game with one leader and a possibly infinite number  of followers.
This population of followers is modeled as a dynamic system.
We are interested in an equilibrium of the game which we characterize by first-order optimality conditions.
We propose the following approach:
The follower level is optimized first with the leader's control as parameter, then the leader's problem is solved  provided certain regularity assumptions hold true, see e.g.~\cite{Aussel2018}.
The limit to infinitely many followers can be derived at different stages of the optimization
 yielding  mean-field descriptions of the model.
The focus of this article is the analysis of the interchangeability of optimization  and derivation of the mean-field, see Figure~\ref{Fig:OrderOfOptMF}.
We establish consistency conditions for the Lagrange multipliers which link the different options.
The novelty of our work lies  not only in the two level problem but also in open loop controls compared, different to prior work studying feedback control techniques, c.f.~\cite{Albi2013,Albi2014,Albi2017,Albi2015a}.
Also, compared to~\cite{Herty2019a}, we derive consistent optimality conditions for a Stackelberg game.

Other related work includes 
a linear quadratic Stackelberg game of a large follower population governed by stochastic differential equations in~\cite{Moon2018}.
There, a local optimal control problem of the followers is solved where the control of the leader is considered as an exogenous stochastic process.
This leads to $\varepsilon$-Nash equilibria and it is shown that $\varepsilon\rightarrow0$ as the number of followers grows to infinity.
A related model is studied in~\cite{Ma2020} where it is distinguished between one major and a large number of minor players.
In contrast to the work in~\cite{Ma2020,Moon2018}, we study a partial differential equation (PDE) on the probability density of the players' states.

Here, we limit ourselves to formal computations in this article.
Other approaches, rigorous derivations, and analytical results on derivatives with respect to measures may be found in~\cite{Bensoussan2017,Cardaliaguet2017,Lasry2007}.

This article is structured as follows:
We begin with the derivation of consistent optimality conditions for two optimal control problems in Section~\ref{Sec:2}.
A model including a single control and a second model where each agent has an individual control, are studied there.
We apply these results to a Stackelberg game in Section~\ref{Sec:3}.
In Section~\ref{Sec:4} a numerical scheme to solve the optimality system of Option 3 is derived.
We conclude with some numerical results in Section~\ref{Sec:5}.

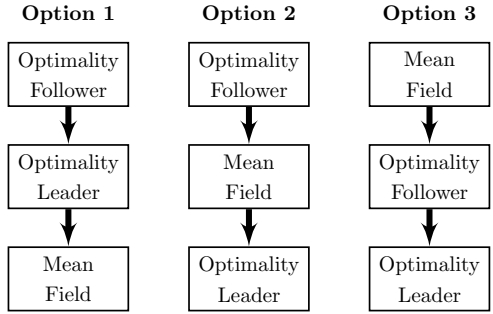
\begin{figure}
	\begin{center}
		\tikzstyle{block4} = [rectangle, draw, thick,
		text width=5em, text centered,  minimum height=3em]
		\tikzstyle{block5} = [rectangle, draw,   thick,
		text width=5em, text centered,  minimum height=3em]
		\tikzstyle{line} = [draw, -latex']
		
	\resizebox{0.4\textwidth}{!}{	
		\begin{tikzpicture}[node distance = 1.7cm, auto]
		\node [] (A0) {\textbf{Option 1}};
		\node [right of=A0,node distance=3cm] (B0) {\textbf{Option 2}};
		\node [right of=B0,node distance=3cm] (C0) {\textbf{Option 3}};
		
		\node [block5,below of=A0,node distance=1cm] (A1) {Optimality\\Follower};
		\node [block5,below of=B0,node distance=1cm] (B1) {Optimality\\Follower};
		\node [block4,below of=C0,node distance=1cm] (C1) {Mean\\ Field};
		\node [block5,below of=A1] (A2) {Optimality\\Leader};
		\node [block4,below of=B1] (B2) {Mean \\Field};
		\node [block5,below of=C1] (C2) {Optimality\\Follower};
		\node [block4,below of=A2] (A3) {Mean \\Field};
		\node [block5,below of=B2] (B3) {Optimality\\Leader};
		\node [block5,below of=C2] (C3) {Optimality\\Leader};


		\path [line,line width=0.1cm] (A1) -- (A2);
		\path [line,line width=0.1cm] (A2) -- (A3);
		\path [line,line width=0.1cm] (B1) -- (B2);
		\path [line,line width=0.1cm] (B2) -- (B3);
		\path [line,line width=0.1cm] (C1) -- (C2);
		\path [line,line width=0.1cm] (C2) -- (C3);
		\end{tikzpicture}
	}
	\end{center}
	\caption{Schematic overview of the various order of operations for the Stackelberg game.}\label{Fig:OrderOfOptMF}
	
\end{figure}

%
%
%

\section{Single Level Problems}\label{Sec:2}
In this section, we study two optimal control problems of interacting agents systems that differ in the nature of the application of the control.
The problem in Section~\ref{Sec:2.1} is controlled by one  control.
In contrast, the problem discussed in Section~\ref{Sec:2.2} captures one control for each agent.

The optimality conditions to each optimal control problem can be derived prior  to the derivation of the mean-field limit or after---resulting in two different optimality systems.
We compare both systems and establish consistency conditions to build a link between them, c.f. Lemma~\ref{Lem:1} and Lemma~\ref{Lem:2}.

Note, the superscript $\mathrm{MO}$ indicates that the mean-field limit is derived prior to optimization. 
The superscript $\mathrm{OM}$ indicates the opposite order, see also Figure~\ref{Fig:OrderOfOptMF}.
\subsection{Single Control  System} \label{Sec:2.1}

We consider the optimal control problem of a system of $N$ interacting agents as follows:
\begin{align}
\begin{split}\label{Eq:OCone}
	\min\limits_\crtlone &~\int\limits_\tini^\tend \left[\objone\left(\crtlone,m\left(\vec{\stateone}\right)\right) +\frac{\regone}{2} \crtlone^2\right]\,\d t\\
	\st &~\dot{\stateone}_i=\frac{1}{N}\suml^N_{j=1} \dynone(\stateone_i,\stateone_j,\crtlone),\quad \stateone_i(\tini)=\stateone_{i,\tini}, \quad i=1,\dots,N,
	\end{split}
\end{align}
where the states $\stateone_i=\stateone_i(t)$ are considered to be in $\R^n$ for the agents $i=1,\dots,N$ and the according initial states are given by $\stateone_{i,\tini}$.
The concatenation of all agents' states is denoted by $\vec{\stateone}=\left(\stateone_i\right)_{i=1}^N\in\R^{Nn}$.
The (common) control is $\crtlone=\crtlone(t)\in\R^{n_\crtlone}$.
The explicit dependence on time $t$  is omitted whenever the intention is clear.
The agents' dynamics is described by $\dynone: \R^n\times\R^n\times\R^{n_\crtlone}\rightarrow\R^n$ and is assumed to be at least differentiable in all arguments.

The common control $\crtlone$ is to be chosen such that an objective functional is minimized over the time horizon $[\tini,\tend]$.
The function $\objone: \R^{n_\crtlone}\times\R^n\rightarrow\R$ takes the control $\crtlone$ and $m$ as arguments and is assumed to be differentiable.
The value $m=m\left(\vec{\stateone } \right)$ is considered be  a vector of a moment of the states, i.e. $m: \R^{Nn}\rightarrow\R^n$ with $$m\left(\vec{\stateone } \right)=\frac{1}{N}\suml^N_{i=1} \tilde{m}\left(\stateone_i\right)$$ and $\tilde{m}: \R^{n}\rightarrow\R^n$.
The objective is regularized by a quadratic term of the control $\crtlone$ with a scalar weighting parameter $\regone>0$.

Assuming the agents are identically independent and the interaction $\dynone$ is symmetric, we compute admissible variations with respect to the mean-field density $\fMOone=\fMOone(t,\stateone)$.
With e.g.~\cite[Proposition 2.1]{Herty2019a}, we have the mean-field evolution equation of the state variables and the objective functional form the following optimal control problem in the strong form:
\begin{align}
\begin{split}\label{Eq:OCMFone}
\min\limits_{\crtlone}	&~\int\limits_\tini^\tend \left[\objone\left(\crtlone,m_{\fMOone}(t)\right) +\frac{\regone}{2} \crtlone^2\right]\,\d t\\
\st &~0=\partial_t\fMOone+\div_\stateone \left(\fMOone \int \dynone(\stateone,\hat{\stateone},\crtlone) \fMOone(t,\hat{\stateone}) \,\d\hat{\stateone}   \right)\\
&~\fMOone(\tini,\stateone)=\fMOone_\tini(\stateone),
\end{split}
\end{align}
where  $m_{\fMOone}(t)=\int \tilde{m}(\stateone) \fMOone\,\d\stateone$.
Formally, the dynamics and the cost of~\eqref{Eq:OCone}  and~\eqref{Eq:OCMFone}
are recovered for  the mean-field probability density $\fMOone$  chosen as empirical distribution:
\begin{equation}\label{Eq:EmpirMeas}
	\mu_N\left(t,\stateone\right)=\frac{1}{N}\suml_{i=1}^N \delta(\stateone-\stateone_i(t)),
\end{equation}
where $\delta$ denotes the Dirac delta.
Similarly, the initial distribution $\fMOone_0$ is obtained as limit for $N\rightarrow\infty$ of the empirical distribution centered at the initial data $\vec{x}_0$.

\begin{lemma}[Single Control  System]\label{Lem:1}
	Consider the optimal control problem in~\eqref{Eq:OCone} of $N$ agents and the optimal control problem ~\eqref{Eq:OCMFone} for the density $\fMOone: [0,T]\times\R^{n}\rightarrow\R$ of agents.
	Let $\fOMone: [0,T]\times\R^{n}\times\R^{n}\rightarrow\R$ be the distribution function, which satisfies the mean-field limit of the first-order the optimality conditions of problem~\eqref{Eq:OCone}.
	The multiplier to the optimality system of~\eqref{Eq:OCMFone} is $\lagrMOone=\lagrMOone(t,\stateone)$.
	
	Then, the solution of the mean-field of the optimality conditions of~\eqref{Eq:OCone} and the solution of the optimality conditions of~\eqref{Eq:OCMFone} are formally identified by:
	\begin{subequations}
	\begin{align}
		\nabla_\stateone\lagrMOone(t,\stateone)&=-		\int \lagrOMone \fOMone_2\left(t,\stateone,\lagrOMone\right)\,\d \lagrOMone,\label{Eq:Lem1Cond2}
	\end{align}
	for all $t\geq0$  and all $\stateone$ in the support of $\fMOone$.
	The function $\fOMone_2$ is the marginal of $\fOMone$:
	\begin{align}
		\fOMone\left(t,\stateone,\lagrOMone\right)&=\fMOone\left(t,\stateone\right)\fOMone_2\left(t,\stateone,\lagrOMone\right).\label{Eq:Lem1Cond1}
	\end{align}
	\end{subequations}
\end{lemma}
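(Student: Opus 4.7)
The plan is to derive both optimality systems explicitly and then match them term by term under the identifications~\eqref{Eq:Lem1Cond1}--\eqref{Eq:Lem1Cond2}. First I would write the first-order optimality conditions for~\eqref{Eq:OCone}: introducing multipliers $\lagrOMone_i(t)\in\R^n$ for each agent's state ODE and forming the Hamiltonian, standard calculus of variations yields a system of adjoint ODEs whose right-hand sides are built from $\nabla_{\stateone_i}\dynone$ and $\nabla\tilde m(\stateone_i)$, together with a pointwise stationarity condition in $\crtlone(t)$ coupling all multipliers through $\nabla_\crtlone\dynone$. Following the passage used in~\cite[Proposition 2.1]{Herty2019a}, I would then take the mean-field limit of the joint empirical measure $\frac{1}{N}\sum_{i=1}^N \delta_{(\stateone_i(t),\lagrOMone_i(t))}$ to obtain a Vlasov-type PDE for the joint density $\fOMone(t,\stateone,\lagrOMone)$ on the product space of states and multipliers; the resulting limit stationarity condition in $\crtlone$ contains the multipliers only through the integral $\iiint \lagrOMone^T \nabla_\crtlone \dynone(\stateone,\hat{\stateone},\crtlone)\, \fOMone(t,\stateone,\lagrOMone)\, \fMOone(t,\hat{\stateone})\,d\stateone\, d\lagrOMone\, d\hat{\stateone}$.

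Second, I would apply the Lagrangian approach to~\eqref{Eq:OCMFone} directly, with scalar multiplier $\lagrMOone(t,\stateone)$ for the transport PDE. Varying with respect to $\fMOone$ and integrating by parts in $\stateone$ produces an adjoint PDE for $\lagrMOone$, together with a control stationarity condition whose multiplier contribution reads $\iint (\nabla_\stateone \lagrMOone(t,\stateone))^T \nabla_\crtlone \dynone(\stateone,\hat{\stateone},\crtlone)\, \fMOone(t,\stateone)\fMOone(t,\hat{\stateone})\, d\stateone\, d\hat{\stateone}$, up to a fixed sign determined by the chosen Lagrangian convention.

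The comparison of the two stationarity conditions then uses the factorization~\eqref{Eq:Lem1Cond1}: rewriting the OM-path integral as $\int \bigl[\int \lagrOMone\, \fOMone_2(t,\stateone,\lagrOMone)\, d\lagrOMone \bigr]^T \nabla_\crtlone \dynone(\stateone,\hat{\stateone},\crtlone)\, \fMOone(t,\stateone)\fMOone(t,\hat{\stateone})\, d\stateone\, d\hat{\stateone}$ shows that the two conditions coincide precisely when the bracketed conditional mean in $\lagrOMone$ equals $-\nabla_\stateone \lagrMOone$, which is exactly~\eqref{Eq:Lem1Cond2}. A parallel check on the adjoint dynamics is then needed to confirm the identification globally in time: the conditional first moment in $\lagrOMone$ of the Vlasov PDE for $\fOMone$, once differentiated in $\stateone$, should reproduce the adjoint PDE for $\lagrMOone$ derived from~\eqref{Eq:OCMFone}.

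The main obstacle is expected to be this last moment-closure step. Generically, taking a first $\lagrOMone$-moment of a Vlasov equation generates terms involving higher moments of $\fOMone$ in $\lagrOMone$. The plan is to exploit that the $\lagrOMone$-characteristics inherited from the agent-level adjoint ODE are affine in $\lagrOMone$, so that the first moment closes on itself and~\eqref{Eq:Lem1Cond2} propagates in time from a compatible terminal condition at $t=\tend$. All manipulations are carried out formally, assuming sufficient smoothness and decay of $\fMOone$ and $\fOMone$ to justify integration by parts, the exchange of integration and differentiation, and the passage to the mean-field limit at the level of the KKT system.
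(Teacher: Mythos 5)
Your proposal is correct and follows essentially the same route as the paper, which omits the proof of Lemma~\ref{Lem:1} and refers to the analogous argument for Lemma~\ref{Lem:2} in Section~\ref{SubSec:Lem2}: derive both optimality systems, pass to the Vlasov equation for the joint density of states and multipliers, factor $\fOMone=\fMOone\fOMone_2$, match the control stationarity conditions via the conditional first moment, and verify consistency of the zeroth and first $\lagrOMone$-moments of the Vlasov equation against the transport and adjoint PDEs of~\eqref{Eq:OCMFone}. Your observation that the first moment closes because the adjoint characteristics are affine in $\lagrOMone$ is exactly why no additional condition is needed here, in contrast to the variance condition~\eqref{Eq:VarZeroCond} required in Theorem~\ref{Th:MainRes}.
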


Note, the consistency conditions in Lemma~\ref{Lem:1} state a decomposition of the probability density $\fOMone(t,\stateone,\lagrOMone)$ to the probability density $\fMOone(t,\stateone)$ in~\eqref{Eq:Lem1Cond1} and the Lagrange multipliers $\lagrOMone$ and $\lagrMOone(t,\stateone)$ in~\eqref{Eq:Lem1Cond2}.

	The proof of Lemma~\ref{Lem:1} is
	omitted since it is analogous to the proof of Lemma~\ref{Lem:2} presented below.

\subsection{Individual Control  System}\label{Sec:2.2}

We consider the interacting agent system of $N$ agents which reads as follows:
\begin{align}
\begin{split}\label{Eq:OCtwo}
\min\limits_\crtltwo &~\frac{1}{N}\suml^N_{i=1}\int\limits_\tini^\tend \left[\objtwo\left(\crtltwo_i,\tilde{m}\left(\statetwo_i\right)\right) +\frac{\regtwo}{2} \crtltwo_i^2\right]\,\d t\\
\st &~\dot{\statetwo}_i=\frac{1}{N}\suml^N_{j=1} \dyntwo(\statetwo_i,\statetwo_j,\crtltwo_i), \quad\statetwo_i(\tini)=\statetwo_{i,\tini}, \quad i=1,\dots,N
\end{split}
\end{align}
Contrary to Section \ref{Sec:2.1}, each agent $i$ influences the model by its control $\crtltwo_i=\crtltwo_i(t)\in\R^{n_{\crtltwo}}$.
%
%
Formally, we obtain a mean-field optimal control problem as:
\begin{align}
\begin{split}\label{Eq:OCMFtwo}
\min\limits_{\crtltwo}	&~\int\limits_\tini^\tend\int \left[\objtwo\left(\crtltwo,\tilde{m}(\statetwo)\right) +\frac{\regtwo}{2} \crtltwo^2\right]\fMOtwo\,\d x\,\d t\\
\st &~0=\partial_t\fMOtwo+\div_\statetwo \left(\fMOtwo \int \dyntwo(\statetwo,\hat{\statetwo},\crtltwo) \fMOtwo(t,\hat{\statetwo}) \,\d\hat{\statetwo}   \right)\\
&~\fMOtwo(\tini,\statetwo)=\fMOtwo_\tini(\statetwo)
\end{split}
\end{align}
A  difference in~\eqref{Eq:OCMFtwo} compared to the problem in~\eqref{Eq:OCMFone} is that the mean-field control $u$ is additionally dependent on the state space, i.e.  $\crtltwo=\crtltwo(t,\statetwo)$.
If the mean-field density $\fMOtwo$ is chosen to be the empirical measure~\eqref{Eq:EmpirMeas} then
dynamics and cost of the problems~\eqref{Eq:OCtwo} and~\eqref{Eq:OCMFtwo} coincide if we define $u_i(t)=u(t,x_i)$.

As for the problem of a single control, we can derive consistency conditions which connect the optimality conditions of~\eqref{Eq:OCtwo} and \eqref{Eq:OCMFtwo}
\begin{lemma}[Individual Control  System]\label{Lem:2}
		
	Consider the optimal control problem in~\eqref{Eq:OCtwo} of $N$ agents and the optimal control problem~\eqref{Eq:OCMFtwo} for the density $\fMOtwo: [0,T]\times\R^{n}\rightarrow\R$ of agents.
	Let $\fOMtwo: [0,T]\times\R^{n}\times\R^{n}\rightarrow\R$ be the distribution function, which satisfies the mean-field limit of the first-order the optimality conditions to~\eqref{Eq:OCtwo}.
	The multiplier to the  the optimality system of~\eqref{Eq:OCMFtwo} is $\lagrMOtwo=\lagrMOtwo(t,\statetwo)$.
	
	Then, the solution of the mean-field of the optimality conditions of~\eqref{Eq:OCtwo} and the solution of the optimality conditions of~\eqref{Eq:OCMFtwo} can be formally identified by:
		\begin{subequations}\label{Eq:Lem2Cond12}
			\begin{equation}
			    \nabla_\statetwo\lagrMOtwo(t,\statetwo)=	-\int \lagrOMtwo \fOMtwo_2\left(t,\statetwo,\lagrOMtwo\right)\,\d \lagrOMtwo,\label{Eq:Lem2Cond2}
			\end{equation}
			for all $t\geq0$ and all $\statetwo$ in the support of $\fMOtwo$.
			The function $\fOMtwo_2$ is the marginal of $\fOMtwo$:
			
	\begin{equation}
	\fOMtwo\left(t,\statetwo,\lagrOMtwo\right)=\fMOtwo\left(t,\statetwo\right)\fOMtwo_2\left(t,\statetwo,\lagrOMtwo\right).\label{Eq:Lem2Cond1}
	\end{equation}
		\end{subequations}

\end{lemma}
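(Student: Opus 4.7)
The plan is to derive both optimality systems explicitly and then reconcile them via the ansatz in~\eqref{Eq:Lem2Cond12}.

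First, I would form the Lagrangian for the finite-$N$ problem~\eqref{Eq:OCtwo} with adjoint variables $\lagrOMtwo_i(t)\in\R^{n}$ attached to each state equation. Taking variations yields the state equation, a terminal condition $\lagrOMtwo_i(T)=0$, an adjoint system in which differentiation with respect to $\statetwo_i$ produces a local term from $\objtwo$ together with, since $\statetwo_i$ enters the dynamics of every agent through the symmetric interaction $\dyntwo$, a non-local sum of the form $\tfrac{1}{N}\sum_j \lagrOMtwo_j\cdot\nabla_2\dyntwo(\statetwo_j,\statetwo_i,\crtltwo_j)$, plus the pointwise control law $\regtwo\,\crtltwo_i + \partial_{\crtltwo}\objtwo + \lagrOMtwo_i\cdot\tfrac{1}{N}\sum_j\partial_{\crtltwo}\dyntwo(\statetwo_i,\statetwo_j,\crtltwo_i)=0$. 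I would then pass to the mean-field limit on the extended phase space $(\statetwo,\lagrOMtwo)\in\R^{2n}$ by introducing the empirical joint measure and letting $N\to\infty$, obtaining a Vlasov-type PDE for $\fOMtwo(t,\statetwo,\lagrOMtwo)$, a closed control law $\crtltwo=\crtltwo(t,\statetwo,\lagrOMtwo)$ obtained from the pointwise formula above, together with the appropriate initial and terminal data.

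Second, I would write the mean-field problem~\eqref{Eq:OCMFtwo} as a PDE-constrained optimization, introduce the multiplier $\lagrMOtwo(t,\statetwo)$ against the continuity equation, and vary in $\fMOtwo$, $\crtltwo(t,\statetwo)$, and $\lagrMOtwo$; this produces directly a backward transport equation for $\lagrMOtwo$ with natural boundary condition $\lagrMOtwo(T,\cdot)=0$ and a control law of the form $\regtwo\,\crtltwo+\partial_{\crtltwo}\objtwo+\nabla_\statetwo\lagrMOtwo\cdot\int\partial_{\crtltwo}\dyntwo(\statetwo,\hat\statetwo,\crtltwo)\fMOtwo(t,\hat\statetwo)\,\d\hat\statetwo=0$. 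Third, I would insert the decomposition~\eqref{Eq:Lem2Cond1} into the Vlasov-type system derived in the first step and integrate out $\lagrOMtwo$: the zeroth moment recovers the forward continuity equation for $\fMOtwo$, while the first $\lagrOMtwo$-moment, together with identification~\eqref{Eq:Lem2Cond2}, should reproduce the backward transport equation for $\lagrMOtwo$, and the two control laws should coincide pointwise once $\crtltwo$ is evaluated at the conditional mean of $\lagrOMtwo$ given $\statetwo$.

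The main obstacle I expect is the non-local adjoint term generated by the symmetric interaction: after the mean-field limit and the ansatz, the $\lagrOMtwo$-weighted integral against the non-local kernel must collapse precisely to $\nabla_\statetwo\lagrMOtwo$ convolved against $\fMOtwo$. Carrying this out hinges on commuting the conditional expectation in $\lagrOMtwo$ at fixed $\statetwo$ with the integration against $\nabla_2\dyntwo$, which is possible only because $\dyntwo$ does not depend on $\lagrOMtwo$ and $\crtltwo$ becomes a function of $(\statetwo,\lagrOMtwo)$ after closure. Verifying this identity, together with compatibility of the terminal datum $\lagrOMtwo(T)=0$ with $\lagrMOtwo(T,\cdot)=0$ after marginalization, is the heart of the argument.
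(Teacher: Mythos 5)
Your plan coincides with the paper's own proof: it derives the Pontryagin system for~\eqref{Eq:OCtwo}, passes to a Vlasov-type equation for $\fOMtwo$ on the extended phase space $(\statetwo,\lagrOMtwo)$, derives the $L^2$ optimality system of~\eqref{Eq:OCMFtwo} with multiplier $\lagrMOtwo$, and then matches the two by inserting the decomposition~\eqref{Eq:Lem2Cond1} and taking the zeroth and first $\lagrOMtwo$-moments (the latter via integration by parts), exactly as you propose. The point you flag as the main obstacle --- collapsing the $\lagrOMtwo$-weighted non-local adjoint term onto $\nabla_\statetwo\lagrMOtwo$ via the conditional mean, modulo the sign convention $\int\lagrOMtwo\,\fOMtwo_2\,\d\lagrOMtwo=-\nabla_\statetwo\lagrMOtwo$ --- is indeed the computation the paper carries out, so your proposal is correct and essentially identical in route.
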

	The proof of Lemma~\ref{Lem:2} is carried out in Section~\ref{SubSec:Lem2}.
	In particular, the   optimality conditions of~\eqref{Eq:OCtwo} may be found in (\ref{Eq:OptOMtwo1}-\ref{Eq:OptOMtwo3}) and its mean-field limit in (\ref{Eq:OM1two}-\ref{Eq:OM2two}).
	The optimality conditions of~\eqref{Eq:OCMFtwo} are given in (\ref{Eq:MO3two}-\ref{Eq:MO2two}).

\begin{corollary}[Parameterized Problems]\label{Cor:Parameter}
	Consider the optimal control problem in~\eqref{Eq:OCtwo} with the parameterized objective  $J=J\left(\crtltwo_i,\tilde{m}\left(\statetwo_i\right);p\right)$.
	Then Lemma~\ref{Lem:2} holds also for the parameterized objective.
\end{corollary}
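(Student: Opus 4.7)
The plan is to repeat the derivation of Lemma~\ref{Lem:2} verbatim, carrying the parameter $p$ along as a spectator symbol, and to verify that each step only involves partial derivatives with respect to the optimization variables (the controls $\crtltwo_i$ or $\crtltwo(t,\statetwo)$) and the states ($\statetwo_i$ or $\fMOtwo$). Because $p$ is not a decision variable, the gradient conditions and adjoint equations pick up $p$ only through partial derivatives $\partial_{\crtltwo} \objtwo(\cdot,\cdot;p)$ and $\partial_{\tilde m} \objtwo(\cdot,\cdot;p)$; the structural form of the KKT system is unchanged.

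First I would write down the Lagrangian of the $N$-agent problem~\eqref{Eq:OCtwo} with the parameterized cost and compute its stationarity conditions, obtaining the same system as in~(\ref{Eq:OptOMtwo1}--\ref{Eq:OptOMtwo3}) up to the replacement $\objtwo(\crtltwo_i,\tilde m(\statetwo_i))\mapsto \objtwo(\crtltwo_i,\tilde m(\statetwo_i);p)$. Passing to the mean-field limit of these optimality conditions is then identical to the unparameterized case, because $p$ does not depend on the agent index $i$ and therefore plays no role in the empirical-measure arguments; one recovers a $p$-dependent analogue of~(\ref{Eq:OM1two}--\ref{Eq:OM2two}) for the density $\fOMtwo(t,\statetwo,\lagrOMtwo;p)$.

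Next I would treat the MO direction, writing the Lagrangian for~\eqref{Eq:OCMFtwo} with the parameterized integrand and deriving stationarity to obtain the $p$-parameterized version of~(\ref{Eq:MO3two}--\ref{Eq:MO2two}) with multiplier $\lagrMOtwo(t,\statetwo;p)$. Matching the OM and MO systems term by term is then exactly the calculation performed in the proof of Lemma~\ref{Lem:2}: the control optimality and the adjoint equation are compared, and the identification~\eqref{Eq:Lem2Cond1}--\eqref{Eq:Lem2Cond2} emerges from the same marginalization argument, now with every object carrying the same fixed parameter $p$.

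The only point that merits attention is that $p$ must genuinely be an external parameter, independent of $t$, of the agent index, and of the state $\statetwo$; under this reading, which is consistent with the notation $J(\crtltwo_i,\tilde m(\statetwo_i);p)$, nothing in the derivation couples $p$ to the differentiation, integration, or limit operations, and the consistency conditions~\eqref{Eq:Lem2Cond12} persist pointwise in $p$. I do not anticipate a genuine obstacle; the statement is a direct corollary obtained by regarding every occurrence of $\objtwo$ in the proof of Lemma~\ref{Lem:2} as $\objtwo(\cdot,\cdot;p)$ and noting that all arguments are linear in the Lagrangian and depend smoothly on $p$ through $\objtwo$ alone.
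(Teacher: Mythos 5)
Your argument is correct and coincides with the paper's (implicit) reasoning: the paper states this corollary without proof precisely because the parameter $p$ enters only through the objective and is inert under all the differentiation, integration, and limiting operations in the proof of Lemma~\ref{Lem:2}. Your careful verification that $p$ is a spectator throughout both the OM and MO derivations is exactly the intended justification.
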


Before proving Lemma \ref{Lem:2},  we  address  an aspect  related to the usage of $L^2$ calculus for the formal computations of the optimality conditions of~\eqref{Eq:OCMFtwo}.
In the proof, Gateaux derivatives of the Lagrangian are computed, see~\eqref{Eq:MOtwo}.
In particular, the derivative with respect to the probability density $\fMOtwo$ is computed.
Probability densities are nonegative and their integral is one.
A consistent derivative with respect to such a function conserves these properties also with the variation, e.g. in Wasserstein calculus.
This means a suitable variation $\eta$ of the probability density $\fMOtwo$ satisfies:
\begin{equation}\label{Eq:Variation}
	\fMOtwo(t,x)+\eta(x)\geq0 \quad\text{ and }\quad\int_{\R^n} \left(\fMOtwo(t,x)+\eta(x)\right)\,\d x=1,
\end{equation}
which is not the case in $L^2$ calculus.
However, this relation is recovered by~\eqref{Eq:Lem2Cond12}.
Assume in the following paragraph that $n=1$.
The Lagrangian of the problem~\eqref{Eq:OCMFtwo} contains the scalar product of the evolution equation of $\fMOtwo$ with the multiplier $\lagrMOtwo$:
\begin{align*}
	\mathcal{L}^{\mathrm{MO}}\left(\fMOtwo,\crtltwo,\lagrMOtwo\right)=&\int\limits_\tini^\tend\int \left[\objtwo\left(\crtltwo,\tilde{m}(\statetwo)\right) +\frac{\regtwo}{2} \crtltwo^2\right]\fMOtwo\,\d x\,\d t\\
	& +\left\langle  \partial_t\fMOtwo+\div_\statetwo \left(\fMOtwo \int \dyntwo(\statetwo,\hat{\statetwo},\crtltwo) \fMOtwo(t,\hat{\statetwo}) \,\d\hat{\statetwo}   \right),   \lagrMOtwo\right\rangle.
\end{align*}
If one uses now instead of the standard $L^2$ scalar product the following scalar product:
\begin{equation*}
\left\langle \fMOtwo,\lagrMOtwo\right\rangle:=\int\,\fMOtwo(t,\statetwo) \partial_\statetwo\lagrMOtwo(t,\statetwo)\d\statetwo,
\end{equation*}
we have that  $\partial_\statetwo \lagrMOtwo$ is  a consistent variation of $\fMOtwo$ for compactly supported $\lagrMOtwo$ since:
$$\int_\R\partial_\statetwo \lagrMOtwo(t,\statetwo) \,\d\statetwo=0$$
Hence the suitable test function in~\eqref{Eq:Variation} is $\eta(x)=\partial_\statetwo \lagrMOtwo(t,\statetwo)$.

\subsection{Proof of Lemma~\ref{Lem:2}} \label{SubSec:Lem2}

We refer to \cite{Herty2019a} for a detailed discussion.

Notation. A function with
an hat is evaluated in space or multiplier of the hat variable, e.g. $\fOMtwohat=\fOMtwo(t,\hat{\statetwo},\hat{\lagrOMtwo})$ and $\fOMtwohat=\fOMtwo(t,\hat{\statetwo})$.

\paragraph{First Optimize then Mean-Field Limit}

Under regularity assumptions on the cost and the dynamics $\dynone$,  Pontryagin's maximum principle provides optimality conditions to~\eqref{Eq:OCtwo}.
The first-order optimality conditions are composed of the state dynamics and the dynamics of the Lagrange multipliers $\left(\lagrOMtwo_i\right)_{i=1}^N\in\R^{Nn}$:
\begin{subequations}\label{Eq:OptOMtwo}
\begin{align}
\dot{\statetwo}_i=&  \frac{1}{N}\suml^N_{j=1} \dyntwo(\statetwo_i,\statetwo_j,\crtltwo_i)    ,\label{Eq:OptOMtwo1}\\
\begin{split}
\dot{\lagrOMtwo}_i =&-\D_\statetwo\tilde{m}(\statetwo_i)^\top\nabla_m \objtwo\left(\crtltwo_i,\tilde{m}\left(\statetwo_i\right)\right)  \\&- \frac{1}{N}\suml_{j=1}^N\left[ \D_1 \dyntwo\left(\statetwo_i,\statetwo_j,\crtltwo_i\right)^\top\lagrOMtwo_i+\D_2 \dyntwo\left(\statetwo_j,\statetwo_i,\crtltwo_j\right)^\top\lagrOMtwo_j\right],  \label{Eq:OptOMtwo2}
\end{split}
\end{align}
with $\statetwo_i(\tini)=\statetwo_{i,\tini}$ and $\lagrOMtwo_i(\tend)=0$ for $i=1,\dots,N$ and in addition to that, the control is determined by:
\begin{equation}\label{Eq:OptOMtwo3}
 0= \nabla_\crtltwo\objtwo\left(\crtltwo_i,\tilde{m}\left(\statetwo_i\right)\right) +\regtwo \crtltwo_i+ \frac{1}{N}   \suml^N_{j=1} \D_\crtltwo\dyntwo(\statetwo_i,\statetwo_j,\crtltwo_i)^\top\lagrOMtwo_i. \end{equation}
\end{subequations}

In this paragraph, we derive evolution equation of the probability density $\fOMtwo=\fOMtwo(t,\statetwo,\lagrOMtwo)$.
To derive the mean field limit, we assume there exists an $u: [0,\tend]\times\R^n\rightarrow\R^{n_u}$ such that: $$\crtltwo(t,\statetwo_i(t))=\crtltwo_i(t),$$
 for $t\geq 0$ and all $i=1,\dots,N$.
The mean-field equation  related to the many particle limit of the dynamical system (\ref{Eq:OptOMtwo1}-\ref{Eq:OptOMtwo2})  is:
\begin{subequations}\label{Eq:OMtwo}
\begin{align}
\begin{split}\label{Eq:OM1two}
0=&\partial_t\fOMtwo+\div_\statetwo \left(\fOMtwo \int \dyntwo(\statetwo,\hat{\statetwo},\crtltwo) \fOMtwohat \,\d\hat{\statetwo}\,\d\hat{\lagrOMtwo}   \right)\\
& -\div_\lagrOMtwo \left(  \fOMtwo \left[\int \left[ \D_1\dyntwo(\statetwo,\hat{\statetwo},\crtltwo)^\top\lagrOMtwo + \D_2\dyntwo(\hat{\statetwo},\statetwo,\hat{\crtltwo})^\top \hat{\lagrOMtwo}\right] \fOMtwohat \,\d\hat{\statetwo}\,\d\hat{\lagrOMtwo} \right.\right.\\&\left.~~~~~~~~~~~~~~~~~~~~~~~~+ \D_\statetwo\tilde{m}(\statetwo)^\top\nabla_m\objtwo(\crtltwo,\tilde{m}(\statetwo))\bigg] \right),
\end{split}
\end{align}
with the initial condition $\fOMtwo(\tini,\statetwo,\lagrOMtwo)=\fOMtwo_\tini(\statetwo,\lagrOMtwo)$ for all $(\statetwo,\lagrOMtwo)$.
The mean-field limit to~\eqref{Eq:OptOMtwo3} is:
\begin{equation}\label{Eq:OM2two}
0= \nabla_\crtltwo\objtwo\left(\crtltwo,m_{\fOMtwo}(t)\right) +\regtwo \crtltwo+ \int \D_\crtltwo\dyntwo(\statetwo,\hat{\statetwo},\crtltwo)^\top\lagrOMtwo \fOMtwohat\,\d\hat{\statetwo}\,\d\hat{\lagrOMtwo},
\end{equation}
\end{subequations}
where $m_{\fOMtwo}(t)=\int \tilde{m}(\statetwo) \fOMtwo\,\d\statetwo\,\d\lagrOMtwo$.

\paragraph{First Mean-Field Limit then Optimize}
The formal first-order optimality conditions of~\eqref{Eq:OCMFtwo} in the $L^2$-sense are given by:
\begin{subequations}\label{Eq:MOtwo}
\begin{align}
0=& \partial_t\fMOtwo+\div_\statetwo\left(\fMOtwo \int \dyntwo(\statetwo,\hat{\statetwo},\crtltwo) \fMOtwohat \,\d\hat{\statetwo}   \right), \label{Eq:MO3two}\\
\begin{split}
0=& \objtwo\left(\crtltwo , \tilde{m}(\statetwo)\right)+\frac{\regtwo}{2}\crtltwo^2-\partial_t \lagrMOtwo\\&- \int \left( \dyntwo\left(\statetwo,\hat{\statetwo},\crtltwo\right)^\top \nabla_\statetwo \lagrMOtwo+ \dyntwo\left(\hat{\statetwo},\statetwo,\hat{\crtltwo}\right)^\top\nabla_{\hat{\statetwo}} \hat{\lagrMOtwo}\right) \fMOtwohat\,\d \hat{\statetwo},\label{Eq:MO1two}\end{split}\\
0=& \nabla_\crtltwo \objtwo \left(\crtltwo, \tilde{m}\left(\statetwo\right)\right)+\regtwo\crtltwo- \int \D_\crtltwo \dyntwo\left(\statetwo,\hat{\statetwo},\crtltwo\right)^\top \nabla_\statetwo\lagrMOtwo\fMOtwohat \,\d \hat{\statetwo},    \label{Eq:MO2two}
\end{align}
\end{subequations}
with the initial value $\fMOtwo(\tini,\statetwo)=\fMOtwo_\tini(\statetwo)$ and the terminal condition $\lagrMOtwo(\tend,\statetwo)=0$ for all $\statetwo$.

\paragraph{The Relation between the Approaches}
In this paragraph, we connect the two approaches and derive the consistency conditions \eqref{Eq:Lem2Cond12}.
We may assume that there exists a decomposition such that:
\begin{equation*}
\fOMtwo\left(t,\statetwo,\lagrOMtwo\right)=\fOMtwo_1\left(t,\statetwo\right)\fOMtwo_2\left(t,\statetwo,\lagrOMtwo\right),
\end{equation*}
where for the conditional probability density   $\int\fOMone_2\left(t,\stateone,\lagrOMone\right)\,\d \lagrOMone=1$ holds.
Upon multiplication of~\eqref{Eq:OM2two} by $\fOMtwo_2$, integration with respect to $\lagrOMtwo$ yields:
\begin{align}
\begin{split}\label{Eq:ObjProof}
0=& \nabla_\crtltwo\objtwo\left(\crtltwo,m_{\fOMtwo_1\fOMtwo_2}(t)\right)\int \fOMtwo_2\,\d\lagrOMtwo +\regtwo \crtltwo \int \fOMtwo_2\,\d\lagrOMtwo\\ &+  \int \D_\crtltwo\dyntwo(\statetwo,\hat{\statetwo},\crtltwo)^\top \fOMtwohat_1\fOMtwohat_2\,\d\hat{\statetwo}\,\d\hat{\lagrOMtwo} \int\lagrOMtwo \fOMtwo_2\,\d\lagrOMtwo,
\end{split}
\end{align}
Hence, if:
\begin{subequations}\label{Eq:Ass23two}
\begin{align}
\fOMtwo_1(t,\statetwo)&=\fMOtwo(t,\statetwo),\label{Eq:Ass2two}\\
\int \lagrOMtwo \fOMtwo_2\,\d\lagrOMone&=-\nabla_\statetwo \lagrMOtwo(t,\statetwo)\label{Eq:Ass3two}
\end{align}
\end{subequations}
then Equation~\eqref{Eq:ObjProof} coincides with~\eqref{Eq:MO2two}.

Using the assumptions in~\eqref{Eq:Ass23two}, we obtain that after integration of the Equation~\eqref{Eq:OM1two} with respect to $\lagrOMtwo$:
\begin{align*}
0=\partial_t\int\fOMtwo_1\fOMtwo_2\,\d\lagrOMtwo +\div_\statetwo\int\fOMtwo_1 \fOMtwo_2  \dyntwo(\statetwo,\hat{\statetwo},\crtltwo) \fOMtwohat_1\fOMtwohat_2 \,\d\hat{\statetwo}\,\d\hat{\lagrOMtwo}\,\d\lagrOMtwo,
\end{align*}
which is equivalent to~\eqref{Eq:MO3two}.

We continue by multiplication of~\eqref{Eq:OM1two} by $\lagrOMtwo$ and insertion of~\eqref{Eq:Ass3two}, then integration by parts yields:
\begin{align*}
\begin{split}
0=&-\partial_t\left(\fOMtwo_1 \nabla_\statetwo\lagrMOtwo \right)-\div_\statetwo \left(\fOMtwo_1 \nabla_\statetwo\lagrMOtwo   \int \dyntwo(\statetwo,\hat{\statetwo},\crtltwo)^\top \fOMtwohat_1 \,\d\hat{\statetwo}\right)\\
& +\fOMtwo_1\iint   \left[ \D_1\dyntwo(\statetwo,\hat{\statetwo},\crtltwo)^\top \lagrOMtwo + \D_2\dyntwo(\hat{\statetwo},\statetwo,\hat{\crtltwo})^\top \hat{\lagrOMtwo} \right] \fOMtwohat_1\fOMtwohat_2\fOMtwo_2 \,\d\hat{\statetwo}\,\d\hat{\lagrOMtwo} \,\d \lagrOMtwo\\
& +\fOMtwo_1\D_\statetwo\tilde{m}(\statetwo)^\top\nabla_m \objtwo\left(\crtltwo,\tilde{m}(\statetwo)\right).
\end{split}
\end{align*}
This equation is equivalent to~\eqref{Eq:MO1two} when computing the gradient with respect to $\nabla_\statetwo$:
\begin{align*}
\begin{split}
0=& - \fOMtwo_1\left\{ \partial_t  \nabla_\statetwo\lagrMOtwo + \nabla_\statetwo^2\lagrMOtwo  \int\dyntwo(\statetwo,\hat{\statetwo},\crtltwo) \fOMtwohat_1 \,\d\hat{\statetwo} +\int \D_1 \dyntwo(\statetwo,\hat{\statetwo},\crtltwo)^\top \nabla_\statetwo\lagrMOtwo  \fOMtwohat_1 \,\d\hat{\statetwo}\right.\\
& \left.+\int  \D_2 \dyntwo(\hat{\statetwo},\statetwo,\hat{\crtltwo})^\top \nabla_{\hat{\statetwo}}\hat{\lagrMOtwo} \fOMtwohat_1 \,\d\hat{\statetwo} -\D_\statetwo\tilde{m}(\statetwo)^\top\nabla_m \objtwo\left(\crtltwo,\tilde{m}(\statetwo)\right)\right\}.	
\end{split}
\end{align*}
This completes the proof. 
\section{Stackelberg Game}\label{Sec:3}

We introduce a linear-quadratic Stackelberg game, consisting of the leader and $N$  followers.
The interchangeability of mean field limit and optimization is discussed using the results from Section~\ref{Sec:2}.

The Stackelberg game to be discussed reads as follows:
\begin{align}
\begin{split}\label{Eq:Stack}
\min\limits_{\crtllead} & ~\int\limits_\tini^\tend \left[\objlead(\crtllead,m(\vec{\statefoll})) +\frac{\reglead}{2} \crtllead^2\right]\,\d t\\
\st 
 &~\min\limits_{\vec{\crtlfoll},\vec{\statefoll}}~ \frac{1}{N}\suml_{i=1}^N\int\limits_{\tini}^\tend \left[\objfoll( \tilde{m}(\statefoll_i);\crtllead)+\frac{\regfoll}{2} \crtlfoll_i^2\right]\,\d t\\
 & ~~~~ \st ~~\dot{\statefoll}_i=\frac{1}{N}\suml_{j=1}^N \dynfoll\left(\statefoll_i,\statefoll_j\right)\left(\statefoll_j-\statefoll_i\right) + \crtlfoll_i, \quad \statefoll_i(\tini)=\statefoll_{i,0}, \quad i=1,\dots,N,
\end{split}
\end{align}
where the leader minimizes the function $\objlead: \R^{n_L}\times\R^{n_F}\rightarrow\R$ regularized
by  a quadratic term of its control $\crtllead=\crtllead(t)\in\R^{n_L}$ with $\reglead>0$.
The structure of the followers' control problem is similar: Every follower $i\in\{1,\dots,N\}$ aims to minimize $\objfoll: \R^{n_F}\times\R^{n_L}\rightarrow\R$ regularized by its quadratic control $\crtlfoll_i=\crtlfoll_i(t)\in\R^{n_F}$ with regularization parameter $\regfoll>0$.
The value $m=m(\vec{\statefoll })$ is considered be a vector of  moments of the states  $m: \R^{Nn_L}\rightarrow\R^{n_L}$.

The structure of the followers' problem  is a potential game~\cite{Monderer1996}, i.e. here the followers' states and controls are not coupled in their objective functions.
Therefore they can be summed up which yields cooperation between the followers.

The followers' problem is governed by an ordinary differential equation of every follower's state $\statefoll_i=\statefoll_i(t)\in\R^{n_F}$ which  couples to the other followers via the interaction kernel $\dynfoll: \R^{n_F}\times\R^{n_F}\rightarrow\R$.
The concatenation of the followers' states and controls is denoted by $\vec{\statefoll}\in\R^{Nn_F}$ and $\vec{\crtlfoll}\in\R^{Nn_F}$.

Unlike the optimal control problems in Section~\ref{Sec:2}, Stackelberg games have multiple levels of optimization.
Therefore the optimality conditions have to be derived in a systematic order.
This results in three different possibilities illustrated in Figure~\ref{Fig:OrderOfOptMF}.
\begin{theorem}\label{Th:MainRes}
Consider the Stackelberg game in~\eqref{Eq:Stack} of a single leader and $N$ followers.

Denote by $\fOOM:[0,T]\times\R^{n_F}\times\R^{n_F}\times\R^{2n_F}\rightarrow\R$
the probability density of the mean-field limit of the followers after optimization of both, the leader and the followers.

Furthermore, let $\fOMO:[0,T]\times\R^{n_F}\times\R^{n_F}\rightarrow\R$
 denote the probability density of the mean field limit  after optimization of the leader and  $\fMOO:[0,T]\times\R^{n_F}\rightarrow\R$
 the probability density of the followers prior to their optimization.

The function $\Theta:[0,T]\times\R^{n_F}\times\R^{n_F}\rightarrow\R$ is the multiplier to $\fOMO$ and the function $\Psi: [0,T]\times\R^{n_F}\rightarrow\R $ is the multiplier to $\fMOO$.

If the condition:
\begin{align}\label{Eq:VarZeroCond}
\int \lagrOMfoll^2 \fOMO_2(t,\statefoll,\lagrOMfoll) \,\d\lagrOMfoll=\left(\int \lagrOMfoll \fOMO_2(t,\statefoll,\lagrOMfoll) \,\d\lagrOMfoll\right)^2,
\end{align}
holds for $t\geq0$ and all $\statefoll\in\R^{n_F}$, then the three optimality systems are equivalent in the mean-field limit and we have  following relations:
\begin{subequations}
	\begin{align}
\fOOM(t,\statefoll,\psi,\theta)&=\fOMO(t,\statefoll,\psi)\fOOM_2(t,\statefoll,\psi,\theta),\\
\nabla_{\statefoll,\psi}\Theta(t,\statefoll,\psi)&=\begin{bmatrix}
\nabla_{\statefoll}\Theta(t,\statefoll,\psi)\\\nabla_{\psi}\Theta(t,\statefoll,\psi)
\end{bmatrix}=-\int\theta \fOOM_2(t,\statefoll,\psi,\theta)\,\d\theta,
\end{align}
for all $t\geq0$ and $\statefoll$ in the support of  $\fOMO$ and:
\begin{align}
\fOMO(t,\statefoll,\lagrOMfoll)&=\fMOO(t,\statefoll)\fOMO_2(t,\statefoll,\lagrOMfoll),\\
\nabla_\statefoll \lagrMOfoll(t,\statefoll)&=-\int \lagrOMfoll\fOMO_2(t,\statefoll,\lagrOMfoll)\,\d \lagrOMfoll,\label{Eq:Consistency4}
\end{align}
\end{subequations}
for all $t\geq0$ and all $\statefoll$  in the support of $\fMOO$.
\end{theorem}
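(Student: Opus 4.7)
The plan is to treat the two levels of the Stackelberg game separately and invoke the single-level results of Section~\ref{Sec:2}. The follower subproblem, with the leader's choice $\crtllead$ frozen as a parameter, is an individual-control system in the sense of Section~\ref{Sec:2.2}, so Corollary~\ref{Cor:Parameter} applies. The leader's upper-level problem, once the follower's optimality system has been inserted as a constraint, is a single-common-control system in the sense of Section~\ref{Sec:2.1}, but posed on the enlarged phase space $(\statefoll,\lagrOMfoll)$, so Lemma~\ref{Lem:1} applies.

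First I will write the three optimality systems explicitly. For Option~1 (OOM) the Pontryagin conditions are formulated at finite $N$ for both followers and leader (introducing the discrete costates $\lagrOMfoll_i$ and the leader's multipliers $\lagrOMleadtwo_i$ attached to the follower Hamiltonian dynamics), and $N\to\infty$ yields a kinetic PDE for the joint density $\fOOM(t,\statefoll,\psi,\theta)$. For Option~2 (OMO) the mean-field limit of the follower Hamiltonian system is taken first, as in the proof of Lemma~\ref{Lem:2}, producing a PDE for $\fOMO(t,\statefoll,\lagrOMfoll)$; the leader's problem is then formulated with adjoint $\lagrMOleadtwo(t,\statefoll,\psi)$. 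For Option~3 (MOO) the mean-field problem~\eqref{Eq:OCMFtwo} is written for $\fMOO$, its first-order conditions produce the adjoint $\lagrMOfoll(t,\statefoll)$, and the leader is optimised on top of that.

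The transition from Option~1 to Option~2 is exactly the swap of leader optimisation and mean-field limit. Conditional on the follower Hamiltonian flow, the leader steers a population of coupled agents on $(\statefoll,\lagrOMfoll)$ through the single common control $\crtllead$, which is the setting of Lemma~\ref{Lem:1}. Its conclusion yields
\begin{equation*}
\fOOM(t,\statefoll,\psi,\theta)=\fOMO(t,\statefoll,\psi)\,\fOOM_2(t,\statefoll,\psi,\theta),\qquad \nabla_{(\statefoll,\psi)}\lagrMOleadtwo=-\int\theta\,\fOOM_2\,\d\theta.
\end{equation*}
The transition from Option~2 to Option~3 is the swap of mean-field limit and follower optimisation, parameterised by $\crtllead$, and is the content of Corollary~\ref{Cor:Parameter}, giving
\begin{equation*}
\fOMO(t,\statefoll,\lagrOMfoll)=\fMOO(t,\statefoll)\,\fOMO_2(t,\statefoll,\lagrOMfoll),\qquad \nabla_\statefoll\lagrMOfoll=-\int\lagrOMfoll\,\fOMO_2\,\d\lagrOMfoll,
\end{equation*}
which is precisely~\eqref{Eq:Consistency4}.

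The final and most delicate step is to check that the leader-level optimality condition obtained in Option~2 agrees with that obtained in Option~3 once the above identifications are inserted. Substituting $\fOMO=\fMOO\,\fOMO_2$ into the Hamiltonian arising from the Option~2 leader problem produces moments of $\lagrOMfoll$ against $\fOMO_2$ of all orders, while the Option~3 formulation only retains the first moment $-\nabla_\statefoll\lagrMOfoll$. The quadratic-moment term $\int\lagrOMfoll^2\,\fOMO_2\,\d\lagrOMfoll$ in particular has no counterpart in Option~3 beyond $(\nabla_\statefoll\lagrMOfoll)^2$. This is exactly where hypothesis~\eqref{Eq:VarZeroCond} enters: it forces the conditional variance of $\lagrOMfoll$ given $\statefoll$ to vanish, so $\lagrOMfoll$ is a deterministic function of $\statefoll$ and every power collapses to the corresponding power of $-\nabla_\statefoll\lagrMOfoll$. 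The main obstacle I expect is the careful bookkeeping in the leader's Hamiltonian on the enlarged phase space, together with verifying that the nonlinearity produced by the follower Hamiltonian structure is at most quadratic in $\lagrOMfoll$, so that the single variance-zero hypothesis really suffices. Once this is checked the Option~2 and Option~3 leader PDEs coincide, and together with the two reductions above this establishes the equivalence of the three optimality systems.
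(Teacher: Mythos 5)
Your proposal follows essentially the same route as the paper's six-step proof: Pontryagin at the follower level to pass to the enlarged phase space $(\statefoll,\lagrOMfoll)$, Lemma~\ref{Lem:1} for the leader/mean-field swap, Corollary~\ref{Cor:Parameter} (via Lemma~\ref{Lem:2}) for the follower/mean-field swap, and the variance-zero hypothesis~\eqref{Eq:VarZeroCond} to close the second moment of the follower costate. The only slight imprecision is that in the paper the term $\int \lagrOMfoll^2 \fOMO_2\,\d\lagrOMfoll$ arises in the evolution equation for the conditional mean $\int \lagrOMfoll \fOMO_2\,\d\lagrOMfoll$, i.e., in matching the constraint dynamics of~\eqref{Eq:Summary4} and~\eqref{Eq:Summary3} rather than in the leader's stationarity condition itself, but the substance of where and why~\eqref{Eq:VarZeroCond} is needed is the same.
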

Before proving this statement in Section~\ref{Sec:3.1}, we provide additional interpretation.

The variance of a random variable $X$ with the realization $x$ and the probability density $\rho$ is defined as $	\mathrm{Var}(X)=\mathbb{E}\left[X^2\right]-\mathbb{E}\left[X\right]^2$,
where $\mathbb{E}\left[X\right]$ denotes the expectation of $X$, which is defined as
$\mathbb{E}\left[X\right]=\int x \rho(x) \,\d x$.

With this,  condition~\eqref{Eq:VarZeroCond} is equivalent to requiring that the variance  of a random variable $Y$ with the probability density $y\mapsto\fOMO_2(t,\statefoll,y)$ is zero for all $(t,\statefoll)$, i.e. with the realization $y$ of $Y$ we have:
\begin{equation*}
	\mathrm{Var}\left(Y\right)=\int y^2 \fOMO_2(t,\statefoll,y) \,\d y-\left(\int y \fOMO_2(t,\statefoll,y) \,\d y\right)^2=0.
\end{equation*}
That is e.g. the case if the probability density $\fOMO_2(t,\statefoll,y)$ coincides with the empirical measure in $y$ concentrated on $\mathbb{E}[Y]$, i.e. $\fOMO_2(t,\statefoll,y)=\delta\left(y-y(t,\statefoll)\right)$.

In the proof for Theorem~\ref{Th:MainRes}, it is  shown that the optimal follower control is $\crtlfoll(t,\statefoll)=\frac{1}{\regfoll}\nabla_\statefoll\Psi$ which is in fact then:
\begin{equation*}
	\crtlfoll(t,\statefoll)=-\frac{1}{\regfoll}\mathbb{E}\left[Y\right].
\end{equation*}

\subsection{Proof of Theorem~\ref{Th:MainRes}} \label{Sec:3.1}

\begin{figure}
	\tikzstyle{block1} = [rectangle, draw=black, text centered,   minimum height=5em, text width=12em]
	\tikzstyle{block2} = [rectangle, draw=black, text centered,   minimum height=5em, text width=18em]
	\tikzstyle{line} = [draw,line width=0.05cm ,-latex']
	\tikzstyle{lined} = [dashed, line width=0.05cm, -latex']
	\tikzstyle{lined2} = [dashed, line width=0.05cm]
	\begin{center}
		\resizebox{\textwidth}{!}{
			\begin{tikzpicture}[node distance = 4cm, auto]
			\node [block1] (0) {Discrete Stackelberg\\
				Two level problem\\
				ODE in $\statefoll_i$
				\eqref{Eq:Stack}
			};
			\node [below of=0,node distance = 4cm] (ABla){};
			\node [below of=0,node distance = 3.3cm] (ABla2) {\textbf{Step 2}\quad\quad\quad\quad\quad\quad~\quad};			
			\node [block2, left of=ABla,node distance=15em] (A1) 
			{ Single level problem\\
				ODE in $\dot{\zeta}_i=[\dot{\statefoll}_i,\dot{\lagrOMfoll}_i]$	~\eqref{Eq:Summary2}
				
			}; 
			\node [block1, right of=ABla,node distance=15em] (A2)
			{ Mean Field Stackelberg\\
				Two level problem\\
				PDE in $\fMOO(t,\statefoll)$
				\eqref{Eq:Summary1}
				
			}; 
			\node [below of=A1] (BBla) {};
			\node [block1, left of=BBla,node distance=7.5em] (B1) 
			{Optimality system\\
				ODE in $\dot{\zeta}_i$, $\dot{\theta}_i$~\eqref{Eq:Summary5}
			}; 
			\node [block1, right of=BBla,node distance=7.5em] (B2) 
			{Singel level problem\\
				PDE in $\fOMO(t,\statefoll,\lagrOMfoll)$
				\eqref{Eq:Summary4}
			}; 
			\node [block1, below of=A2] (B3) 
			{Single level problem\\
				PDE in $\fMOO(t,\statefoll)$, $\Psi(t,\statefoll)$ 
				\eqref{Eq:Summary3}
			}; 
			\node [block1, below of=B1] (C1) 
			{ Optimality system\\
				PDE in $\fOOM(t,\zeta,\theta)$
				\eqref{Eq:Summary8}
			}; 
			\node [block1, below of=B2] (C2) 
			{Optimality system\\
				PDE in $\fOMO(t,\statefoll,\lagrOMfoll)$,  $\Theta(t,\statefoll,\psi)$
				\eqref{Eq:Summary6}
			}; 
			\node [block1, below of=B3] (C3) 
			{Optimality system\\
				PDE in $\fMOO(t,\statefoll)$, $\Psi(t,\statefoll)$,  $\lagrMOlead_1(t,\statefoll)$, $\lagrMOlead_2(t,\statefoll)$
				\eqref{Eq:FullOptMOO}
			}; 

			\path [line,line width=0.05cm] (0) -- (A1) node [right, pos=0.5 ]{~OPT} ;
			\draw [lined,black] (0.south west) --  (A1.north) node [left, pos=0.4 ]{\textbf{Step 1} \quad};
			
			\node[below of=C2, node distance=0.9cm, black] (Dummy1) {};
			\node[right of=Dummy1, node distance=2.5cm, black] (Dummy2) {};
			
			\node[draw=black, dashed, line width=0.05cm, fit=(A1)(B1) (B2) (C1) (C2) (Dummy2)](Step2line) {};
			
			\node[above of=B2, node distance=1.2cm, black] (Step3) {\quad\quad\quad\quad\quad\quad\quad \textbf{Step 3} };
			\node[draw=black, dashed, line width=0.05cm, fit= (B2)(Step3)](Step3line) {};
			
			\draw [lined,black] (B2.east) --  (B3.west) node [below, pos=0.5 ]{\begin{minipage}{1.5cm}\begin{center}
				\textbf{Step 6}
				\end{center}
				\end{minipage} };
			\draw [lined,black] (B3.west) --  (B2.east) node [below, pos=0.5 ]{};
			
			\draw [lined,black] (A2.south  west) --  (B3.north west) node [right, pos=0.5 ]{ \textbf{Step 5}};
			\draw [lined,black] (0.south east) --  (A2.north) node [right, pos=0.5 ]{\quad \textbf{Step 4}};
			
			\path [line] (0) -- (A2) node [left, pos=0.5 ]{MF ~} ;
			\path [line] (A1) -- (B1) node [left, pos=0.5 ]{OPT} ;
			\path [line] (A1) -- (B2) node [right, pos=0.5 ]{MF} ;
			\path [line] (A2) -- (B3) node [right, pos=0.5 ]{OPT} ;
			\path [line] (B1) -- (C1) node [right, pos=0.5 ]{MF} ;
			\path [line] (B2) -- (C2) node [right, pos=0.5 ]{OPT} ;
			\path [line] (B3) -- (C3) node [right, pos=0.5 ]{OPT} ;
			\end{tikzpicture}
		}
	\end{center}
	\caption{This graph illustrates the proof of Theorem~\ref{Th:MainRes}. The solid lines indicate the order of optimization \textrm{(OPT)} and mean field limit \textrm{(MF)}. The dashed lines refer to the steps in the proof.}\label{Fig:Graph}
\end{figure}
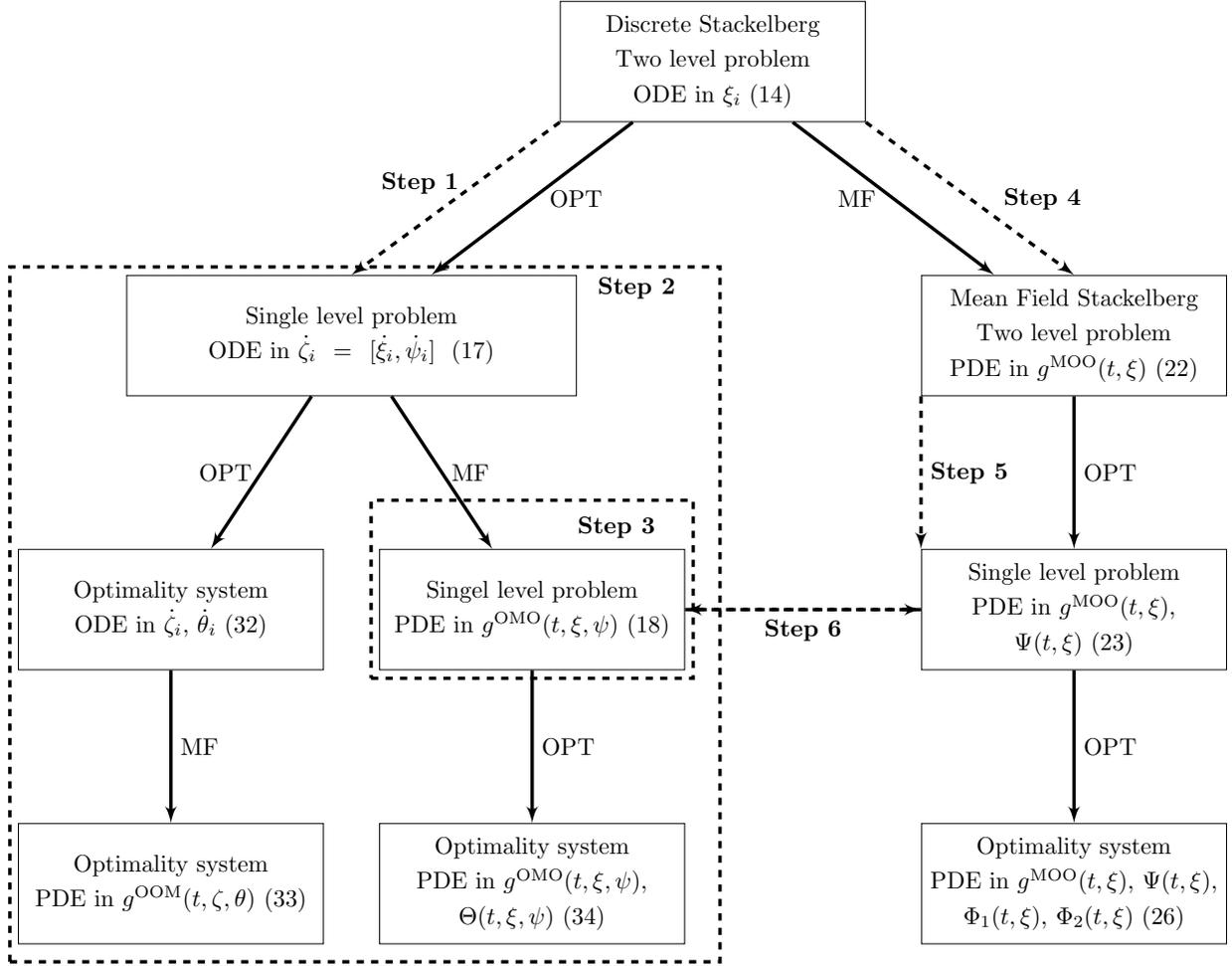

The proof consists of six steps, in which we follow the graph of Figure~\ref{Fig:Graph}.
For notation convenience, arguments of functions are omitted.

	\emph{Step 1.} We begin the analysis on the follower level of the game in Equation~\eqref{Eq:Stack}.
	Under regularity assumptions, the followers' optimization problem  allows using Pontryagin's maximum principle.
	With this, the optimal response of the followers to the leader  can be characterized by a coupled system of ordinary differential equations (ODE).
	It consists of the state dynamic ${\statefoll}_i$ and the dynamic of the dual ${\lagrOMfoll}_i$ for every follower $i=1,\dots,N$.
	The optimal controls $\crtlfoll_i$ are available explicitly and  substituted in the ODE system.
	Replacing the optimal control problem of the follower by the ODE system yields the leader's optimal control problem:
	\begin{align}\label{Eq:Summary2}
	\begin{split}
	\min\limits_{\crtllead} & ~\int\limits_\tini^\tend 	\left[\objlead(\crtllead,m(\vec{\statefoll})) +\frac{\reglead}{2} \crtllead^2\right]\,\d t\\
	\st 
	&~\dot{\statelead}_i=\begin{bmatrix}
	\dot{\statefoll}_i\\\dot{\lagrOMfoll}_i
	\end{bmatrix}=\frac{1}{N}\suml_{j=1}^N						G(\statelead_i,\statelead_j,v),\quad i=1,\dots,N\\
	& ~ \statefoll_i(0)=\statefoll_{i,0},\quad \lagrOMfoll_i(T)=0, \quad i=1,\dots,N,\\
	\end{split}
	\end{align}
	where the dynamic is composed as $G(\statelead_i,\statelead_j,v)=\begin{bmatrix}
		G_1(\statelead_i,\statelead_j,v)\\G_2(\statelead_i,\statelead_j,v)
	\end{bmatrix}$ for:
	\begin{align*}
        G_1(\statelead_i,\statelead_j,v)=&\dynfoll(\statefoll_i,\statefoll_j)(\statefoll_j-\statefoll_i) -\frac{1}{\regfoll} \lagrOMfoll_i,\\
		G_2(\statelead_i,\statelead_j,v)=&-\D_\statefoll\tilde{m}(\statefoll_i)^\top \nabla_m\objfoll(\tilde{m}(\statefoll_i);v)  -\D_{\statefoll_i}\left[ \dynfoll(\statefoll_i,\statefoll_j)(\statefoll_j-\statefoll_i)\right]^\top \lagrOMfoll_i\\
		&-\D_{\statefoll_j}\left[ \dynfoll(\statefoll_j,\statefoll_i)(\statefoll_i-\statefoll_j)\right]^\top \lagrOMfoll_j.
	\end{align*}
	\emph{Step 2.} Starting from the optimization problem in~\eqref{Eq:Summary2}, we now show the interchangeability of optimization and mean field limit for this problem and give the necessary conditions.
	
	We apply Lemma~\ref{Lem:1} to Problem~\eqref{Eq:Summary2} and its formal mean-field limit for the probability density $\fOMO=\fOMO(t,\zeta)$:
	\begin{align}\label{Eq:Summary4}
	\begin{split}
	\min\limits_{\crtllead} & ~\int\limits_\tini^\tend \left[\objlead(\crtllead,m_{\fOMO}(t) )+\frac{\reglead}{2} \crtllead^2\right]\,\d t\\
	\st &~0=\partial_t\fOMO+\div_\statelead \left(\fOMO \int G(\statelead,\hat{\statelead},v) \fOMOhat \,\d\hat{\statelead}   \right)\\
	&~\fOMO(\tini,\statelead)=\fOMO_\tini(\statelead)\\
	&~\fOMO(\tend,\statelead)=\fOMO_\tend(\statelead).\\
	\end{split}
	\end{align}
	Hence, provided that we have on the support of $\fOMO$:
	\begin{equation*}
			\nabla_\statelead\Theta(t,\statelead)=-\int\theta \fOOM_2(t,\statelead,\theta)\,\d\theta,
	\end{equation*}
	for the multiplier $\Theta$ to $\fOOM$, we obtain that the optimality systems to \eqref{Eq:Summary2} and \eqref{Eq:Summary4} coincides in the mean-field limit.
	The probability density $\fOOM$ corresponding to the formal first order optimality system of problem~\eqref{Eq:Summary2} is given in terms of $\fOMO$ and $\fOOM_2$ according to:
	\begin{equation*}
		\fOOM(t,\statelead,\theta)=\fOMO(t,\statelead)\fOOM_2(t,\statelead,\theta).
	\end{equation*}

	\emph{Step 3.}
	Now, the formal mean-field optimal control problem~\eqref{Eq:Summary4} is reformulated. 
	We denote by $\fMOO$ the probability density fulfilling the state equation in~\eqref{Eq:Summary1}.
	
	We decompose  $\fOMO$ as follows:
			\begin{equation*}
				\fOMO(t,\statefoll,\lagrOMfoll)=\hMOO(t,\statefoll)\fOMO_2(t,\statefoll,\lagrOMfoll),\label{Eq:Ansatz1}
			\end{equation*}
			where $\int\fOMO_2(t,\statefoll,\lagrOMfoll)\,\d \lagrOMfoll=1$.
			Furthermore, we denote the expected value by:
		\begin{align*}
		\rlam(t,\statefoll):=&\int \lagrOMfoll\fOMO_2(t,\statefoll,\lagrOMfoll)\,\d \lagrOMfoll.
		\end{align*}

	\emph{a)} Inserting this ansatz in the objective in~\eqref{Eq:Summary4} yields:
	\begin{align}
		\int_0^T \left[\objlead(\crtllead,m_{\hMOO\fOMO_2}(t))+\frac{\reglead}{2}\crtllead^2\right]\,\d t=		\int_0^T \left[\objlead(\crtllead,m_{\hMOO}(t))+\frac{\reglead}{2}\crtllead^2\right]\,\d t.\label{Eq:Step3a}
	\end{align}

		\emph{b)} For the dynamic we have:
	\begin{align*}
	0=\partial_t \hMOO &+ \int\div_\statefoll \left( \hMOO\fOMO_2 \int G_1(\statelead,\hat{\statelead}, \crtllead) \hMOOhat\fOMOhat_2\,\d\hat{\statelead}\right) \,\d\lagrOMfoll\\
	&+ \int\div_\lagrOMfoll \left( \hMOO\fOMO_2 \int G_2(\statelead,\hat{\statelead}, \crtllead) \hMOOhat\fOMOhat_2\,\d\hat{\statelead}\right) \,\d\lagrOMfoll,
	\end{align*}
	and using the definition of $G_1$ we have:
	\begin{align*}
		0=&\partial_t \hMOO + \div_\statefoll \left( \hMOO \left[ \int \dynfoll(\statefoll,\hat{\statefoll})(\hat{\statefoll}-\statefoll) \hMOOhat\,\d\hat{\statefoll} 
	-\frac{1}{\regfoll} \int \lagrOMfoll \fOMO_2 \,\d \lagrOMfoll
	\right] \right),\\
	=&\partial_t \hMOO + \div_\statefoll \left( \hMOO\int\fOMO_2  \left[\dynfoll(\statefoll,\hat{\statefoll})(\hat{\statefoll}-\statefoll)-\frac{1}{\regfoll}\lagrOMfoll \right] \hMOOhat\fOMOhat_2\,\d\hat{\lagrOMfoll}\,\d\hat{\statefoll} \,\d\lagrOMfoll\right).
	\end{align*}
	We simplify integrals and have the expression by the definition of $\rlam$:
	\begin{align}\label{Eq:Step3b}
	0=\partial_t \hMOO + \div_\statefoll \left( \hMOO \left[ \int \dynfoll(\statefoll,\hat{\statefoll})(\hat{\statefoll}-\statefoll) \hMOOhat\,\d\hat{\statefoll} 
	-\frac{1}{\regfoll} \rlam(t,\statefoll)
	\right] \right).
	\end{align}

		\emph{c)} The formal equation for $\rlam$ is obtained upon integration of~\eqref{Eq:Summary4}:
	\begin{align*}\begin{split}
	0=&\partial_t  \left(\hMOO\rlam\right) \\&+ \int\lagrOMfoll \div_\statefoll  \left( \hMOO \fOMO_2 \int \left[\dynfoll(\statefoll,\hat{\statefoll})(\hat{\statefoll}-\statefoll)-\frac{1}{\regfoll}\lagrOMfoll \right] \hMOOhat\fOMOhat_2\,\d\hat{\statefoll}\,\d\hat{\lagrOMfoll}\right)\,\d\lagrOMfoll \\
	&+ \int\lagrOMfoll \div_\lagrOMfoll  \left( \hMOO \fOMO_2 \int 
	G_2(\statelead,\hat{\statelead}, \crtllead)
	\hMOOhat\fOMOhat_2\,\d\hat{\statefoll}\,\d\hat{\lagrOMfoll}\right)\,\d\lagrOMfoll.
	\end{split}
	\end{align*}
	Integration by parts yields:
	\begin{align*}\begin{split}
		0=\partial_t  \left(\hMOO\rlam\right) &+ \div_\statefoll\left(
		\hMOO \left[
			\int \dynfoll(\statefoll,\hat{\statefoll})(\hat{\statefoll}-\statefoll) \hMOOhat\fOMOhat_2\fOMO_2 \,\d\hat{\statefoll}\,\d\hat{\lagrOMfoll}\,\d\lagrOMfoll\right.\right.\\
			&~~~~~~~~~~~~~~~~\left.\left.- \frac{1}{\regfoll}\int \lagrOMfoll^2  \hMOOhat\fOMOhat_2\fOMO_2 \,\d\hat{\statefoll}\,\d\hat{\lagrOMfoll}\,\d\lagrOMfoll
		  \right]
		\right)\\
		&- \hMOO \int   \fOMO_2 
		G_2(\statelead,\hat{\statelead}, \crtllead)
		\hMOOhat\fOMOhat_2\,\d\hat{\statelead}\,\d\lagrOMfoll,\\
	=\partial_t  \left(\hMOO\rlam\right) &+ \div_\statefoll\left(
	\hMOO \left[ \rlam\int 
	\dynfoll(\statefoll,\hat{\statefoll})(\hat{\statefoll}-\statefoll)^\top
	 \hMOOhat  \,\d\hat{\statefoll}- \frac{1}{\regfoll}\int \lagrOMfoll^2 \fOMO_2 \,\d\lagrOMfoll
	\right]
	\right)\\
	&- \hMOO \int   \fOMO_2  
	G_2(\statelead,\hat{\statelead}, \crtllead)
	\hMOOhat\fOMOhat_2\,\d\hat{\statelead}\,\d\lagrOMfoll.
	\end{split}
	\end{align*}
Using the assumption in~\eqref{Eq:VarZeroCond}, i.e.:
\begin{align*}
	\int \lagrOMfoll^2 \fOMO_2 \,\d\lagrOMfoll=\left(\int \lagrOMfoll \fOMO_2 \,\d\lagrOMfoll\right)^2,
\end{align*}
 we obtain the following equality:
\begin{align*}\begin{split}
0=&\partial_t  \left(\hMOO\rlam\right) + \div_\statefoll\left(
\hMOO \rlam \left[ \int 
\dynfoll(\statefoll,\hat{\statefoll})(\hat{\statefoll}-\statefoll)^\top
\hMOOhat  \,\d\hat{\statefoll}- \frac{1}{\regfoll}\rlam
\right]
\right)\\
&~~~~~~~~~~~~~~~~ - \hMOO \int   \fOMO_2 \int 
G_2(\statelead,\hat{\statelead}, \crtllead)
\hMOOhat\fOMOhat_2\,\d\hat{\statelead}\,\d\lagrOMfoll,\\
	=&\rlam\left[ \partial_t \hMOO+\div_\statefoll\left( \hMOO\left[ \int 
	\dynfoll(\statefoll,\hat{\statefoll})(\hat{\statefoll}-\statefoll)
	\hMOOhat  \,\d\hat{\statefoll}- \frac{1}{\regfoll}\rlam \right]\right) \right]\\
	&+\hMOO \left[ \partial_t \rlam +\nabla_\statefoll\rlam \cdot\left( \int 
	\dynfoll(\statefoll,\hat{\statefoll})(\hat{\statefoll}-\statefoll)
	\hMOOhat  \,\d\hat{\statefoll}- \frac{1}{\regfoll}\rlam
	\right)\right.\\
	& ~~~~~~~~~~~~~~  \left. -\int   \fOMO_2 
	G_2(\statelead,\hat{\statelead}, \crtllead)
	\hMOOhat\fOMOhat_2\,\d\hat{\statelead}\,\d\lagrOMfoll\right].
\end{split}
\end{align*}
Due to~\eqref{Eq:Step3b}, we get on the support of $\hMOO$ using the definition of $G_2$:
\begin{align*}
\begin{split}
	0=&\partial_t \rlam +\nabla_\statefoll\rlam \left( \int 
	\dynfoll(\statefoll,\hat{\statefoll})(\hat{\statefoll}-\statefoll)^\top
	\hMOOhat  \,\d\hat{\statefoll}- \frac{1}{\regfoll}\rlam
	\right) \\
&	-\int   \fOMO_2
\left[
-\D_\statefoll\tilde{m}(\statefoll)^\top \nabla_m\objfoll(\tilde{m}(\statefoll);v) -\D_{\statefoll}\left[ \dynfoll(\statefoll,\hat{\statefoll})(\hat{\statefoll}-\statefoll)\right]^\top \lagrOMfoll \right.
\\
&\left.~~~~~~~~~~~~~~~~-\D_{{\statefoll}}\left[ \dynfoll(\hat{\statefoll},\statefoll)(\statefoll-\hat{\statefoll})\right]^\top \hat{\lagrOMfoll}
\right]
	\hMOOhat\fOMOhat_2\,\d\hat{\statelead}\,\d\lagrOMfoll.
	\end{split}
\end{align*}
Using that $\int\fOMOhat_1\,\d\hat{ \statefoll}=\int\fOMOhat_2\,\d\hat{ \psi}=1$ yields:
\begin{align}\label{Eq:Step3c}
\begin{split}
0=&\partial_t \rlam +\nabla_\statefoll\rlam \left( \int 
\dynfoll(\statefoll,\hat{\statefoll})(\hat{\statefoll}-\statefoll)^\top
\hMOOhat  \,\d\hat{\statefoll}- \frac{1}{\regfoll}\rlam
\right) 
+\D_\statefoll\tilde{m}(\statefoll)^\top \nabla_m\objfoll(\tilde{m}(\statefoll);v)\\
&+\int \D_{\statefoll}\left[ \dynfoll(\statefoll,\hat{\statefoll})(\hat{\statefoll}-\statefoll)\right]^\top \rlam \hMOOhat\,\d\hat{\statefoll} +\int\D_{\hat{\statefoll}}\left[ \dynfoll(\hat{\statefoll},\statefoll)(\statefoll-\hat{\statefoll})\right]^\top\hat{\rlam} \hMOOhat\,\d\hat{\statefoll}.
\end{split}
\end{align}
	We leave this  and come back to Equation~\eqref{Eq:Step3c} in Step 6 where we connect it with the optimality system of~\eqref{Eq:Summary3}.
	
\emph{Step 4.}
Due to Lemma~\ref{Lem:2}, the corresponding mean-field formulation of the optimal control problem of the followers in~\eqref{Eq:Stack} is obtained and the moment $m$ of the followers in the leader's objective functional is rewritten by the density $\fMOO$.
This yields the mean-field Stackelberg game:
\begin{align}\label{Eq:Summary1}
\begin{split}
\min\limits_{\crtllead} & ~\int\limits_\tini^\tend \left[\objlead(\crtllead,m_{\fMOO}(t) +\frac{\reglead}{2} \crtllead^2\right]\,\d t\\
\st 
&~\min\limits_{\crtlfoll,\fMOO}~ \int\limits_{\tini}^\tend\int \left[\objfoll(\tilde{m}(\statefoll);\crtllead)+\frac{\regfoll}{2} \crtlfoll^2\right]\fMOO\,\d \statefoll\,\d t\\
& ~~~~ \st ~~0=\partial_t\fMOO+\div_\statefoll \left(\fMOO \int \left[\dynfoll(\statefoll,\hat{\statefoll})(\hat{\statefoll}-\statefoll)+\crtlfoll\right] \fMOOhat \,\d\hat{\statefoll}   \right)\\
&~~~~~~~~~~\fMOO(\tini,\statefoll)=\fMOO_\tini(\statefoll).
\end{split}
\end{align}
We can identify
	\begin{align*}
	\fOMO(t,\statefoll,\lagrOMfoll)&=\fMOO(t,\statefoll)\fOMO_2(t,\statefoll,\lagrOMfoll),\\
	-\int\lagrOMfoll\fOMO_2(t,\statefoll,\lagrOMfoll)\,\d\lagrOMfoll&=\nabla_\statefoll \lagrMOfoll(t,\statefoll),
	\end{align*}
and we obtain equivalence of~\eqref{Eq:Stack} and~\eqref{Eq:Summary1} according to Lemma~\ref{Lem:2} in the mean-field limit.

\emph{Step 5.}
	The first-order optimality conditions for the follower problem in~\eqref{Eq:Summary1} are given by~\eqref{Eq:Summary3} which yields the mean-field leader's problem as:
	\begin{align}\label{Eq:Summary3}
	\begin{split}
	\min\limits_{\crtllead} & ~\int\limits_\tini^\tend \left[\objlead(\crtllead,m_{\fMOO}(t)) +\frac{\reglead}{2} \crtllead^2\right]\,\d t\\
	\st &~0=\partial_t\fMOO+\div_\statefoll \left(\fMOO \int \left[\dynfoll(\statefoll,\hat{\statefoll})(\hat{\statefoll}-\statefoll)+\frac{1}{\regfoll}\nabla_\statefoll\Psi\right] \fMOOhat \,\d\hat{\statefoll}   \right)\\
	&~0=\partial_t \Psi(t,\statefoll) 
	+\int\left\{ \left[\dynfoll(\statefoll,\hat{\statefoll})(\hat{\statefoll}-\statefoll)+\frac{1}{\regfoll}\nabla_\statefoll\Psi\right]^\top \nabla_\statefoll \Psi 
	\right.\\&~~~~~~ +\left. \left[\dynfoll(\hat{\statefoll},\statefoll)(\statefoll-\hat{\statefoll})+\frac{1}{\regfoll}\nabla_{\hat{\statefoll}}\hat{\Psi}\right]^\top \nabla_{\hat{\statefoll}} \hat{\Psi}\right\} \fMOOhat\,\d\hat{\statefoll}
	\\
	&~~~~~~ -\objfoll(\tilde{m}(\statefoll),\crtllead)-\frac{1}{2\regfoll}\left(\nabla_\statefoll\Psi\right)^2\\
	&~\fMOO(\tini,\statefoll)=\fMOO_\tini(\statefoll), \quad\Psi(\tend,\statefoll)=0,
	\end{split}
	\end{align}	
where the follower control is given by:
\begin{align*}
\crtlfoll(t,\statefoll)=\frac{1}{\regfoll}\nabla_\statefoll\Psi(t,\statefoll).
\end{align*}

\emph{Step 6.} 
 Finally, we show the equivalence of~\eqref{Eq:Summary4} and~\eqref{Eq:Summary3}.
 
\emph{a)}
For $\hMOO(t,\statefoll)=\fMOO(t,\statefoll)$, we have that the objective functionals in~\eqref{Eq:Step3a} and  in~\eqref{Eq:Summary3} coincide.

\emph{b)} 
	The first constraint in~\eqref{Eq:Summary3} and~\eqref{Eq:Step3b} are equivalent provided that:
	\begin{align}\label{Eq:Consitency2}
		\rlam(t,\statefoll)=-\nabla_\statefoll \lagrMOfoll(t,\statefoll).
	\end{align}

\emph{c)} 
		Due to the constraint in~\eqref{Eq:Summary3}, the gradient $\nabla_\statefoll\Psi$ formally fulfills:
\begin{align*}
\begin{split}
0=&\partial_t \nabla_\statefoll \Psi(t,\statefoll) 
+\nabla_\statefoll\int\left\{ \left[\dynfoll(\statefoll,\hat{\statefoll})(\hat{\statefoll}-\statefoll)+\frac{1}{\regfoll}\nabla_\statefoll\Psi\right]^\top \nabla_\statefoll \Psi
\right.\\ &\left.
+ \left[\dynfoll(\hat{\statefoll},\statefoll)(\statefoll-\hat{\statefoll})+\frac{1}{\regfoll}\nabla_{\hat{\statefoll}}\hat{\Psi}\right]^\top \nabla_{\hat{\statefoll}} \hat{\Psi}\right\} \fMOOhat\,\d\hat{\statefoll}
\\
&
 -\D_\statefoll\tilde{m}(\statefoll)^\top\objfoll(\tilde{m}(\statefoll),\crtllead)-\frac{1}{\regfoll}\nabla^2_\statefoll\Psi\nabla_\statefoll\Psi,\\
=&\partial_t \nabla_\statefoll \Psi(t,\statefoll)  + \int \left\{\D_\statefoll \left[\dynfoll(\statefoll,\hat{\statefoll})(\hat{\statefoll}-\statefoll)\right]^\top\nabla_\statefoll \Psi+\dynfoll(\statefoll,\hat{\statefoll})(\hat{\statefoll}-\statefoll)\nabla^2_\statefoll\Psi+\frac{2}{\regfoll}\nabla^2_\statefoll\Psi\nabla_\statefoll\Psi\right.
\\&\left.+\D_\statefoll\left[\dynfoll(\hat{\statefoll},\statefoll)(\statefoll-\hat{\statefoll})\right]^\top \nabla_{\hat{\statefoll}}\hat{\Psi}\right\}\fMOOhat\,\d\hat{ \statefoll} -\D_\statefoll\tilde{m}(\statefoll)^\top\objfoll(\tilde{m}(\statefoll),\crtllead)-\frac{1}{\regfoll}\nabla^2_\statefoll\Psi\nabla_\statefoll\Psi,
\end{split}
\end{align*}
and therefore:
\begin{align}\label{Eq:Step3d}
\begin{split}
0=&\partial_t \nabla_\statefoll \Psi(t,\statefoll) \\
&+ \int \left\{\D_\statefoll \left[\dynfoll(\statefoll,\hat{\statefoll})(\hat{\statefoll}-\statefoll)\right]^\top\nabla_\statefoll \Psi+\dynfoll(\statefoll,\hat{\statefoll})(\hat{\statefoll}-\statefoll)\nabla^2_\statefoll\Psi
\right.\\&\left.
~~~~~~~~~+\D_\statefoll\left[\dynfoll(\hat{\statefoll},\statefoll)(\statefoll-\hat{\statefoll})\right]^\top \nabla_{\hat{\statefoll}}\hat{\Psi}\right\}\fMOOhat\,\d\hat{ \statefoll}
\\
&
 -\D_\statefoll\tilde{m}(\statefoll)^\top\objfoll(\tilde{m}(\statefoll),\crtllead)+\frac{1}{\regfoll}\nabla^2_\statefoll\Psi\nabla_\statefoll\Psi.
\end{split}
\end{align}
	Equations~\eqref{Eq:Step3c} and~\eqref{Eq:Step3d} coincide provided that the consistency condition in~\eqref{Eq:Consitency2} holds true.

Hence, the optimality conditions coincide under the condition of the Theorem.
This finishes the proof.


\section{Solution Method}\label{Sec:4}

We propose an iterative scheme, that alternates between updating the leader control $\crtllead$ and solving a set of PDEs.

In the previous section we have proven that under the given conditions the order of the optimization steps and the mean-field limit can be  exchanged.
However, the resulting optimization systems differ considerably.

In Option 1, the OOM approach, the optimality system consists of a PDE and an algebraic condition, the unknown is the probability density $\fOOM(t,\zeta,\theta)$ which is a $4n_F+1$-dimensional function.
The optimality system for Option 2 (OMO), consists of two coupled PDEs in the probability density $\fOMO(t,\zeta)$ and the costate $\Theta(t,\zeta)$.
Both depend on $2n_F+1$ variables.

We therefore develop an algorithm for Option 3, the MOO approach,  which consists of four coupled PDEs.
The unknowns  are $\fMOO(t,\statefoll)$, $\Psi(t,\statefoll)$, $\Phi_1(t,\statefoll)$, and $\Phi_2(t,\statefoll)$ and they depend on only $n_F+1$ variables.
The optimality system derived by the MOO approach reads:
\begin{subequations}\label{Eq:FullOptMOO}
	\begin{align}
	0=&\partial_t\fMOO+\div_\statefoll \left(\fMOO \int \left[\dynfoll(\statefoll,\hat{\statefoll})(\hat{\statefoll}-\statefoll)+\frac{1}{\regfoll}\nabla_\statefoll\Psi\right] \fMOOhat \,\d\hat{\statefoll}   \right),\label{Eq:FullOptMOO1}\\
	\begin{split}\label{Eq:FullOptMOO2}
	0=&\partial_t \Psi +	\int \left[\dynfoll(\statefoll,\hat{\statefoll})(\hat{\statefoll}-\statefoll)^\top \nabla_\statefoll \Psi+ \dynfoll(\hat{\statefoll},\statefoll)(\statefoll-\hat{\statefoll})^\top \nabla_{\hat{\statefoll}} \hat{\Psi}\right] \fMOOhat\,\d\hat{\statefoll}
	\\
	&
	 +\frac{1}{2\regfoll}\left(\nabla_\statefoll\Psi\right)^2- \objfoll(\tilde{m}(\statefoll),\crtllead),
	\end{split}\\
	\begin{split}	\label{Eq:FullOptMOO3}
	0=&\partial_t \Phi_1
	+ \frac{1}{\regfoll}\nabla_\statefoll \Phi_1^\top \nabla_\statefoll\Psi\\ &+	\int \left[\dynfoll(\statefoll,\hat{\statefoll})(\hat{\statefoll}-\statefoll)^\top \nabla_\statefoll \Phi_1+ \dynfoll(\hat{\statefoll},\statefoll)(\statefoll-\hat{\statefoll})^\top \nabla_{\hat{\statefoll}} \hat{\Phi}_1\right] \fMOOhat\,\d\hat{\statefoll}\\
	&+	\int \left[\dynfoll(\statefoll,\hat{\statefoll})(\hat{\statefoll}-\statefoll)^\top \nabla_\statefoll \Psi+ \dynfoll(\hat{\statefoll},\statefoll)(\statefoll-\hat{\statefoll})^\top \nabla_{\hat{\statefoll}} \hat{\Psi}\right] \hat{\Phi}_2\,\d\hat{\statefoll}
	\\&- \nabla_m \objlead(\crtllead,m_{\fMOO} (t))\tilde{m}(\statefoll), \end{split}\\
	\begin{split}\label{Eq:FullOptMOO4}
	0=&\partial_t \Phi_2+\div_\statefoll \left( \int \dynfoll(\statefoll,\hat{\statefoll})(\hat{\statefoll}-\statefoll)\fMOOhat \,\d\hat{ \statefoll} \Phi_2\right)
	\\&+\div_\statefoll \left( \int \dynfoll(\statefoll,\hat{\statefoll})(\hat{\statefoll}-\statefoll) \hat{\Phi}_2\,\d\hat{ \statefoll}\fMOO \right)
	\\
	&
	 + \frac{1}{\regfoll} \div_\statefoll \left( \nabla_\statefoll\Psi \Phi_2\right)  - \frac{1}{\regfoll} \div_\statefoll \left( \nabla_\statefoll\Phi_1 \fMOO\right), 
	\end{split}\\
	0=&\nabla_\crtllead\objlead(\crtllead,m_{\fMOO}(t) ) +\reglead \crtllead -\int \nabla_\crtllead \objfoll (\tilde{m}(\statefoll),\crtllead)\Phi_2  \,\d\statefoll,\label{Eq:FullOptMOO5}\\
	& \fMOO(\tini,\statefoll)=\fMOO_\tini(\statefoll), \quad\Psi(\tend,\statefoll)=0, \quad \Phi_1(\tend,\statefoll)=0, \quad\Phi_2(\tini,\statefoll)=0.
	\end{align}
\end{subequations}
Note, Equations~\eqref{Eq:FullOptMOO1} and~\eqref{Eq:FullOptMOO4} are forward in time, while~\eqref{Eq:FullOptMOO2} and~\eqref{Eq:FullOptMOO3} are backwards in time.

We solve the equation~\eqref{Eq:FullOptMOO2} for $\nabla_\statefoll\Psi$ instead of $\Psi$, likewise we solve~\eqref{Eq:FullOptMOO3} for $\nabla_\statefoll\Phi_1$ instead of $\Phi_1$.
Then all PDEs are nonlinear conservative transport equations.
These PDEs are integro-differential equations and  have stiff source terms as  $\regfoll\rightarrow0$.

Observe,  that the equation for the gradient $\nabla_\statefoll \Psi$ is independent of the other unknowns in case $P(\statefoll,\hat{\statefoll})= 0$.
Then we have:
\begin{subequations}\label{Eq:MOOP01}
	\begin{align}
	0=&\partial_t \nabla_\statefoll \Psi(t,\statefoll) + \nabla_\statefoll\left( \frac{1}{2\regfoll} (\nabla_\statefoll\Psi(t,\statefoll))^2 -\objfoll(\tilde{m}(\statefoll),\crtllead(t))  \right),\label{Eq:MOOP01a}\\
	0=&\nabla_\statefoll\Psi(T,\statefoll).\label{Eq:MOOP01b}
	\end{align}
\end{subequations}
This equation also determines the followers control which we specify in the lemma below.
\begin{lemma}
	Consider the Stackelberg game given by~\eqref{Eq:Stack}. Then the follower control is:
	 \begin{equation*}
		\crtlfoll(t,\statefoll)=\frac{1}{\regfoll}\nabla_\statefoll\Psi(t,\statefoll).
	\end{equation*}
	If the interaction kernel is $P(\statefoll,\hat{ \statefoll})=0$,
	then the evolution of the optimal follower's control $\crtlfoll$ is uniquely determined by Equation~\eqref{Eq:MOOP01} for any given leader control $\crtllead$.
\end{lemma}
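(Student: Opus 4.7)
The statement has two parts: the explicit formula for $\crtlfoll$, and the fact that under $P\equiv 0$ this control is determined by~\eqref{Eq:MOOP01} alone. I would prove them in turn.

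\textbf{Part 1 (the formula).} The plan is to extract the control from the first-order optimality system for the mean-field follower problem in~\eqref{Eq:Summary1}. By Lemma~\ref{Lem:2} applied (with parameter $\crtllead$, in the sense of Corollary~\ref{Cor:Parameter}) to the follower subproblem, the optimal $\crtlfoll$ is characterized by the stationarity condition~\eqref{Eq:MO2two} with objective $\objfoll(\tilde{m}(\statefoll);\crtllead)$ (independent of $\crtlfoll$) and dynamics $\dynfoll(\statefoll,\hat\statefoll)(\hat\statefoll-\statefoll)+\crtlfoll$. Since $\nabla_{\crtlfoll}\objfoll=0$ and $\D_{\crtlfoll}[\dynfoll(\statefoll,\hat\statefoll)(\hat\statefoll-\statefoll)+\crtlfoll]=I$, the condition collapses to
\begin{equation*}
0=\regfoll\crtlfoll-\nabla_\statefoll\Psi\int \fMOOhat\,\d\hat{\statefoll}=\regfoll\crtlfoll-\nabla_\statefoll\Psi,
\end{equation*}
because $\nabla_\statefoll\Psi$ is independent of $\hat\statefoll$ and $\fMOO$ is a probability density. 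This yields $\crtlfoll(t,\statefoll)=\frac{1}{\regfoll}\nabla_\statefoll\Psi(t,\statefoll)$, in agreement with the expression already used in Step~5 of the proof of Theorem~\ref{Th:MainRes}.

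\textbf{Part 2 (reduction under $P\equiv 0$).} Setting $P(\statefoll,\hat\statefoll)=0$ in the costate equation~\eqref{Eq:FullOptMOO2} eliminates both integral terms, leaving
\begin{equation*}
0=\partial_t \Psi+\frac{1}{2\regfoll}(\nabla_\statefoll\Psi)^2-\objfoll(\tilde{m}(\statefoll),\crtllead),\qquad \Psi(T,\statefoll)=0.
\end{equation*}
Taking $\nabla_\statefoll$ of this relation, using $\nabla_\statefoll[(\nabla_\statefoll\Psi)^2]=2\,\nabla^2_\statefoll\Psi\,\nabla_\statefoll\Psi$, gives exactly~\eqref{Eq:MOOP01a}, and differentiating $\Psi(T,\cdot)=0$ yields~\eqref{Eq:MOOP01b}. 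This system is closed in $\nabla_\statefoll\Psi$ once $\crtllead$ is fixed, and the follower control is then recovered via Part~1.

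\textbf{Part 3 (uniqueness).} For a given leader control $t\mapsto\crtllead(t)$, the scalar equation for $\Psi$ is a backward Hamilton--Jacobi equation with convex Hamiltonian $H(p)=\tfrac{1}{2\regfoll}p^2$ and a source depending only on $(t,\statefoll)$; with the terminal condition $\Psi(T,\cdot)=0$ it admits a unique viscosity solution (equivalently, it can be represented by a Hopf--Lax / value-function formula). Uniqueness of $\Psi$ yields uniqueness of $\nabla_\statefoll\Psi$ where it is classically defined, and hence of $\crtlfoll$. The main obstacle I anticipate is precisely this uniqueness step: at the formal level adopted throughout the paper it suffices to remark that the coupling to $\fMOO$ and $\Phi_1,\Phi_2$ has been removed when $P\equiv 0$, so~\eqref{Eq:MOOP01} closes on its own, but a rigorous argument would require either sufficient regularity of $\tilde m$ and $\objfoll$ to run a classical method-of-characteristics argument, or viscosity-solution theory to handle the nonlinear term $(\nabla_\statefoll\Psi)^2$.
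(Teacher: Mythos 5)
Your proposal is correct and follows essentially the same route as the paper, whose proof simply invokes the Lagrange multiplier theorem on the lower-level problem of~\eqref{Eq:Summary1} (i.e.\ Step~5 of the proof of Theorem~\ref{Th:MainRes}); your Part~1 just makes the stationarity computation via~\eqref{Eq:MO2two} explicit, and your Part~2 is the same observation that $P\equiv0$ decouples~\eqref{Eq:FullOptMOO2} and that taking $\nabla_\statefoll$ yields~\eqref{Eq:MOOP01}. The viscosity-solution discussion in Part~3 goes beyond the paper's purely formal treatment but is a reasonable remark on what a rigorous uniqueness argument would require.
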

\begin{proof}
	Lagrange multiplier theorem is applied to lower level problem of~\eqref{Eq:Summary1}, see Step 5 of the proof of Theorem~\ref{Th:MainRes}.
\end{proof}

For $P\equiv0$, we propose the following sequential solution to~\eqref{Eq:FullOptMOO}:
The solution to Equation~\eqref{Eq:MOOP01} allows to solve the conservation law for $\fMOO$:
\begin{subequations}\label{Eq:MOOP02}
\begin{align}
0=&\partial_t\fMOO(t,\statefoll)+\div_\statefoll \left(\frac{1}{\regfoll}  \nabla_\statefoll\Psi(t,\statefoll)  \fMOO(t,\statefoll) \right), \label{Eq:MOOP02a}\\
\fMOO(\tini,\statefoll)=&\fMOO_\tini(\statefoll).\label{Eq:MOOP02b}
\end{align}
\end{subequations}
Also, we solve a balance  equation for $\nabla_\statefoll \Phi_1$:
\begin{subequations}\label{Eq:MOOP03}
	\begin{align}\begin{split}
		0=&\partial_t\nabla_\statefoll \Phi_1(t,\statefoll)
	+ \nabla_\statefoll\left(\frac{1}{\regfoll}\nabla_\statefoll \Phi_1(t,\statefoll)^\top \nabla_\statefoll\Psi(t,\statefoll)\right)
	\\&- \nabla_m \objlead(\crtllead(t),m_{\fMOO} (t))\nabla_\statefoll\tilde{m}(\statefoll),\label{Eq:MOOP03a}
	\end{split}\\
	0=&\nabla_\statefoll\Phi_1(T,0).\label{Eq:MOOP03b}
	\end{align}
\end{subequations}
Finally, we solve for $\Phi_2$ by \eqref{Eq:FullOptMOO4}:
\begin{subequations}\label{Eq:MOOP04}
	\begin{align}
	0=&\partial_t \Phi_2(t,\statefoll) + \frac{1}{\regfoll} \div_\statefoll \left( \nabla_\statefoll\Psi(t,\statefoll) \Phi_2(t,\statefoll)\right)  - \frac{1}{\regfoll} \div_\statefoll \left( \nabla_\statefoll\Phi_1(t,\statefoll) \fMOO(t,\statefoll)\right) ,\label{Eq:MOOP04a}\\
	0=&\Phi_2(0,\statefoll).\label{Eq:MOOP04b}
	\end{align}
\end{subequations}
This procedure sequentially solves the optimality system for any fixed leader control $\crtllead$.

 Then, Equation~\eqref{Eq:FullOptMOO5} implicitly describes the optimal leader control depending on the objective function of the followers and the leader as well as the dual variable $\Phi_2$.
As last step we update of the leader control: $$\crtllead^{k+1}(t)=\crtllead^k(t)+\sigma_kd^k(t)$$
where $d(t)\in\R^{n_L}$ for $t\in[0,T]$  is computed by:
\begin{equation*}
d(t)=-\left( \nabla_\crtllead\objlead(\crtllead(t),m_{\fMOO}(t) ) +\reglead \crtllead(t) -\int \nabla_\crtllead \objfoll (\tilde{m}(\statefoll),\crtllead(t))\Phi_2(t,\statefoll)  \,\d\statefoll\right).
\end{equation*} 
We combine this with a backtracking line search for the step size $\sigma_k>0$  based on the Armijo condition for the leader objective, c.f. \cite[Cor.2]{Armijo1966}.
The complete procedure is summarized  in Algorithm ~\ref{Alg:1}.

\begin{algorithm}[H]
	\caption{Continuous Optimization Method }\label{Alg:1}
	\begin{algorithmic}[1]
		\State \textbf{Initialize} Choose initial guess for the leader control $\crtllead^0(t)\in\R^{n_L}$ for $t\in[0,T]$.
		\For{$k=0,1,\dots$}
		\State Solve backward Equation~\eqref{Eq:MOOP01} with $\crtllead^{k}$ to get $\nabla_\statefoll \Psi^{k}$.
		\State Solve forward Equation~\eqref{Eq:MOOP02} to get $\fMOO_{k}$.
		\State Solve backward Equation~\eqref{Eq:MOOP03}  to get $\nabla_\statefoll \Phi_1^{k}$.
		\State Solve forward Equation~\eqref{Eq:MOOP04}  to get $ \Phi_2^{k}$.		
		\State Compute a decent direction 
		\begin{equation*}
			d^k(t)=-\left( \nabla_\crtllead\objlead(\crtllead^k(t),m_{\fMOO_k}(t) ) +\reglead \crtllead^k(t) -\int \nabla_\crtllead \objfoll (\tilde{m}(\statefoll),\crtllead^k(t))\Phi^k_2(t,\statefoll)  \,\d\statefoll\right).
		\end{equation*} 
		\State Choose a step size $\sigma_k>0$ according to Armijo rules.
		\State Update control $\crtllead^{k+1}(t)=\crtllead^k(t)+\sigma_kd^k(t)$.
		\If {The termination condition is satisfied,} STOP.\EndIf
		\EndFor 
	\end{algorithmic}
\end{algorithm}

For all PDEs we use periodic boundary conditions.
The state space is discretize by equidistant cells, we use $n_\statefoll$ to denote the number of cells in a state space dimension of $\statefoll$.
We use  Lax-Friedrich's method to describe  the  numerical flux across the cell interfaces~\cite{LeVeque1992}.
The integral in $\objlead$ is discretized by a first  order  quadrature rule.

The stopping criteria of the algorithm is that 
the change in the leader control is below a relative tolerance.


\section{Numerical Results} \label{Sec:5}

\subsection{Setup}
In this section we discuss the results of the numerical experiments  that were implemented in Matlab.
We start with the description of the used  parameters:
The dimension of the state and control space of the followers  is  $n_F=1$ and we assume  $\statefoll\in[0,2]$.
Also the leader control is one-dimensional and we consider the time time $T=1$.

The leader objective combines tracking of the leader control with  moment of the followers as:
 $$\objlead(\crtllead,m_{\fMOO}(t))=\frac{1}{2}(\crtllead_{\mathrm{d}}+m_{\fMOO}(t)-\crtllead)^2,$$ with the desired control: $$\crtllead_{\mathrm{d}}(t)=\sin(2\pi t).$$
 The followers objective couple the states of the followers and the leader control as:
 \begin{equation}\label{Eq:FollObj}
 \objfoll(\tilde{m}(\statefoll),\crtllead)=-\frac{1}{2}(\tilde{m}(\statefoll)-\crtllead)^2.
 \end{equation}
 Both objective functions are regularized by a quadratic term, the regularization parameters are
 $\reglead,\regfoll$ in the experiments.
 We consider the expectation $\tilde{m}(\statefoll)=\statefoll$. 
 As initial condition for the probability density $\fMOO$ we chose the uniform distribution: 
 $$\fMOO_0(\statefoll) =\chi_{[0.5,1.5]}(\statefoll).$$
The state space is discretized by $n_\statefoll$ equidistant cells.
The time steps of the PDE solving are chosen to satisfy the Courant-Friedrichs-Lewy condition with the constant $\mathrm{CFL}=0.95$.
For the evaluation of the objective functions and the termination conditions the time is discretized by $n_t$  equidistant points in time. 
Whenever necessary linear reconstruction is performed by  interpolation.
Further, we choose a maximum iteration number for Algorithm~\ref{Alg:1}.
$\mathrm{maxIter}=100$.
The tolerance is related to the space discretization as
$\mathrm{tol}=\frac{1}{100n_\statefoll}$.
As initial guess: $$\crtllead^0(t)=t,$$ is chosen.

\subsection{Experiments}
In Figure~\ref{Fig:CPU} on the left, the behavior of the solutions is illustrated  for different discretizations of the leader control $\crtllead$.
We observe  monotonically decrease of the $L^2$ error as the control grid increases.
On the right,  the implementation of Algorithm~\ref{Alg:1} is compared  to fminuncon, a commercial trust-region solver in Matlab where we provide gradient information based on same PDE solvers.
The implementation outperforms fminuncon in terms of computational time for all mesh sizes and the computational time increases linearly with mesh refinement in Algorithm~\ref{Alg:1}.

\begin{figure}[]
	\begin{center}
		\begin{minipage}{0.5\textwidth}
			\includegraphics[width=\textwidth]{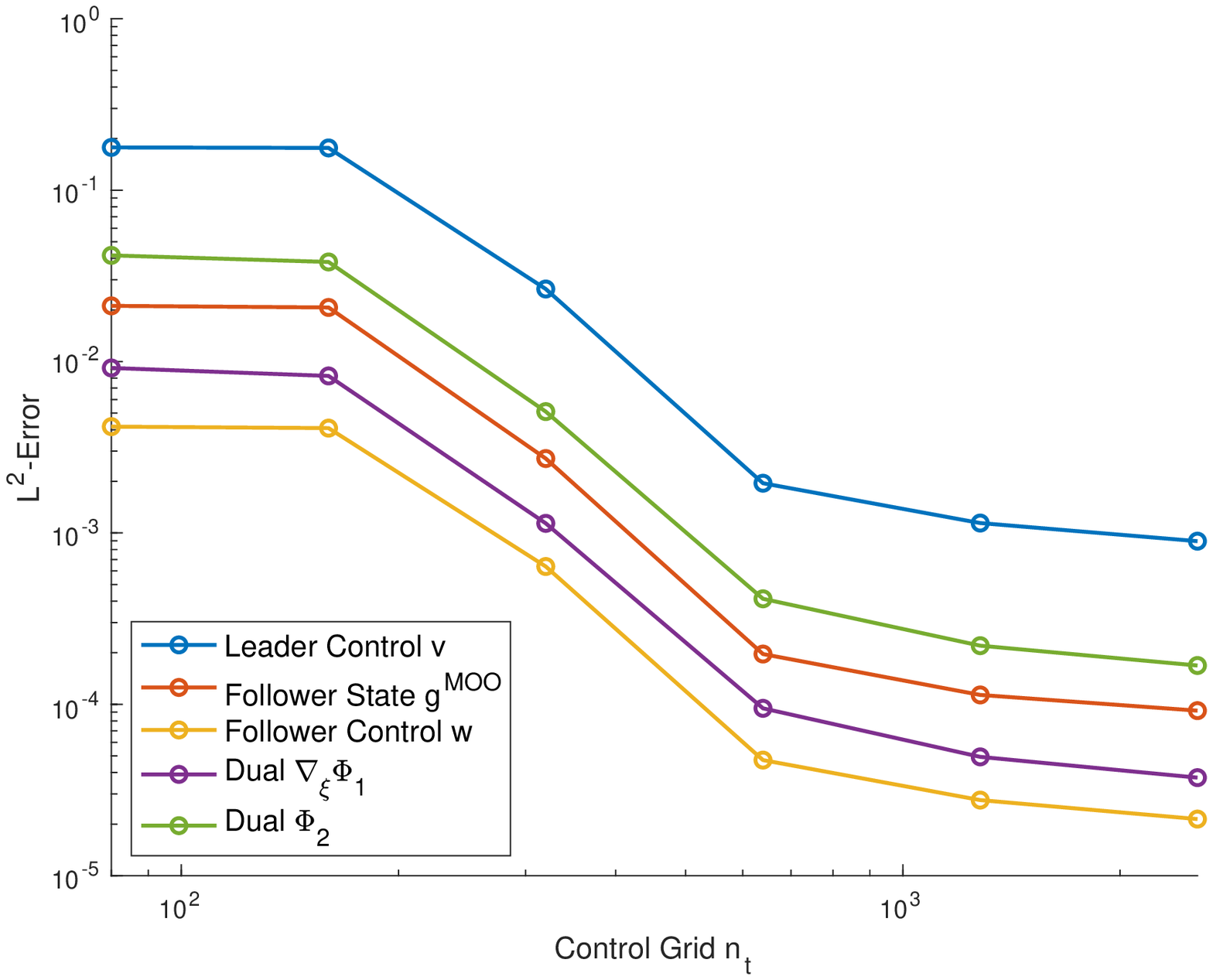}
		\end{minipage}\begin{minipage}{0.5\textwidth}
			\includegraphics[width=\textwidth]{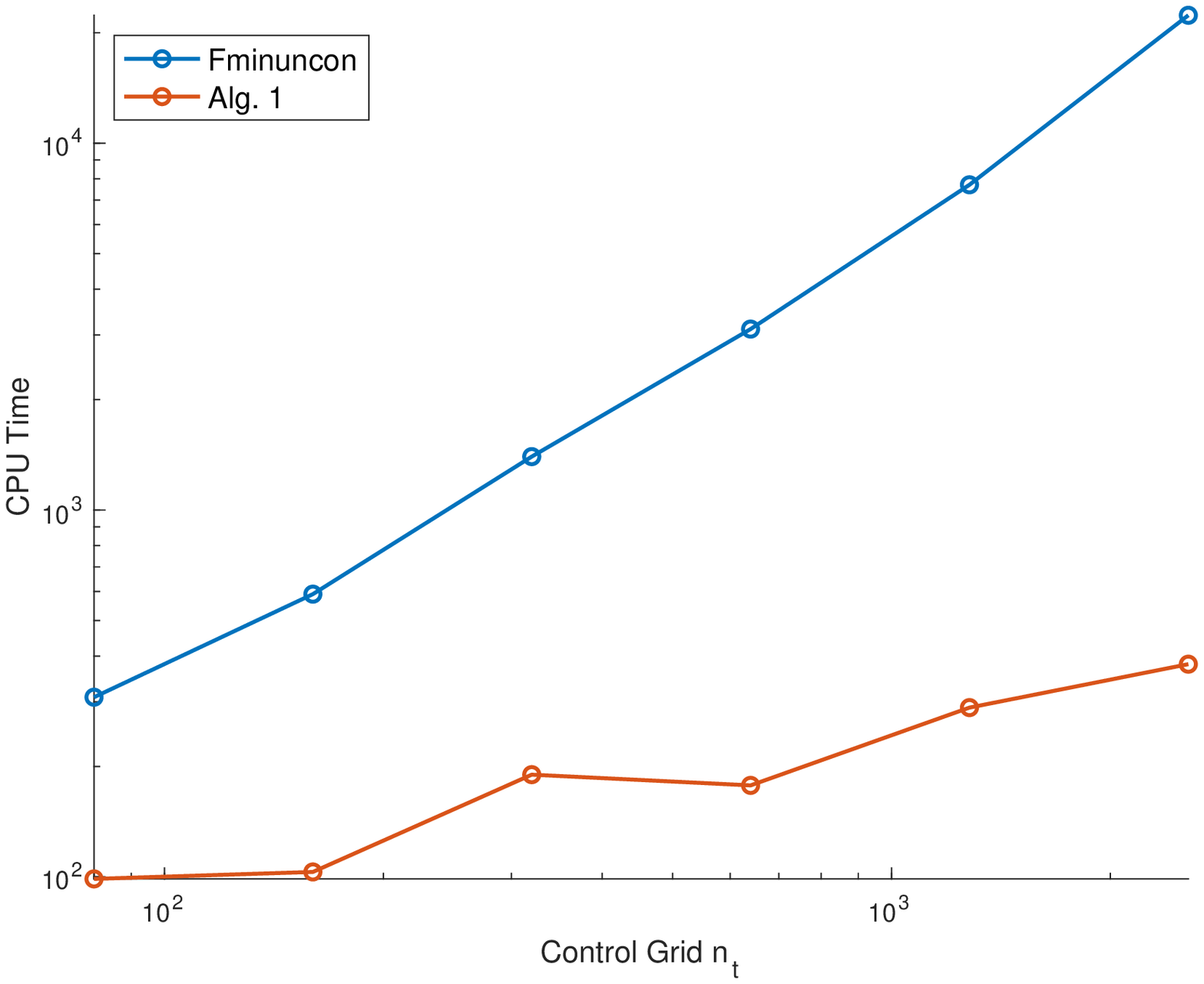}
	\end{minipage}

	\end{center}

	\caption{Performance. On the left, the	L$^2$-error of the leader control and the PDE solutions is plotted for refined mesh  $n_t$ of the leader control.
		The solutions are compared to the solution of the finest mesh.
		On the right, CPU times of Matlab fiminuncon trust-region with user specified gradient and an implementation of Algorithm~\ref{Alg:1} are compared.
	 }\label{Fig:CPU}
\end{figure}

%
%

All experiments in the plots are performed  for the space discretization of $n_\statefoll=500$.

In Figure~\ref{Fig:LeaderCrtl}, the leader control is illustrated.
The control action is larger for smaller regularization parameters due to decreasing  influence of the objective.
For any regularization, the sinodal shape of the optimal leader control $v$ is clearly recognizable.
This corresponds to the expected control $v_\mathrm{d}$.

\begin{figure}[]
	\begin{center}
	\includegraphics[width=0.48\textwidth]{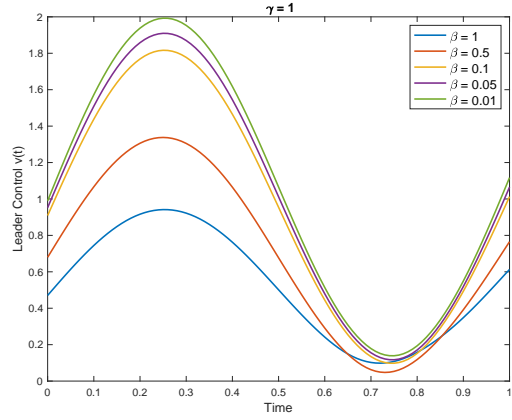}
		\end{center}
	\caption{The optimal leader control is shown depending on the choice of the regularization parameter.
		\label{Fig:LeaderCrtl}}
\end{figure}

In Figure~\ref{Fig:Foll} on the left, the control of the followers is shown as surface plot.
The final time condition $\nabla_\statefoll\Psi(T,\statefoll)=0$ is visible in the control plot.
On the right the resulting evolution of the followers' state is plotted.
Note, the choice of the followers' objective in~\eqref{Eq:FollObj}, give them the initiative to move away from the leader control.
Due to this, we observe that the followers tend to concentrate at $\statefoll=2$ and $\statefoll=0$ as time evolves.

\begin{figure}[]
	\begin{center}
		\includegraphics[width=0.7\textwidth]{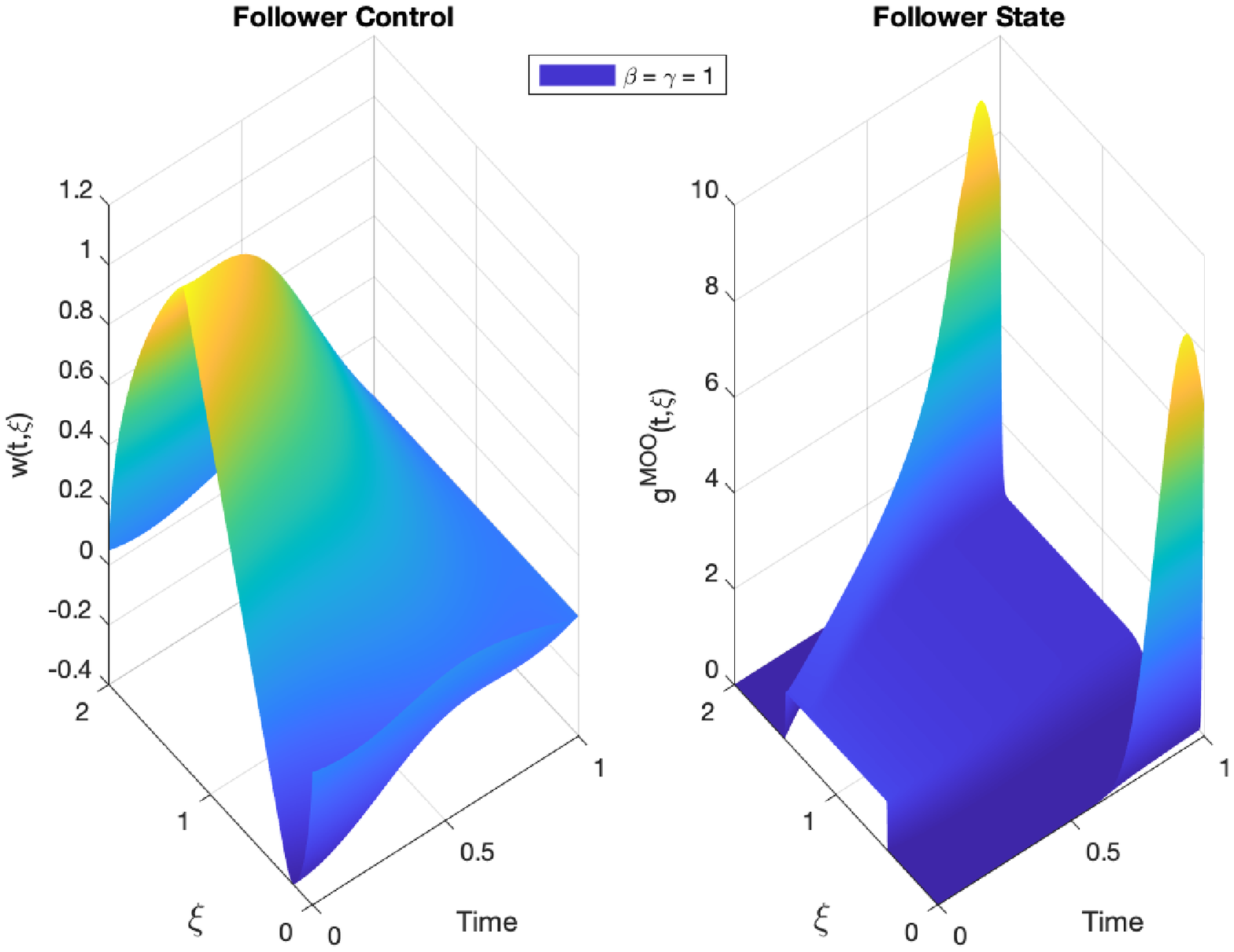}
	\end{center}
	
	\caption{Time evolution of the followers' control and state.
		On the left, the solution of Equation~\eqref{Eq:MOOP01}, which is   $\crtlfoll(t,\statefoll)=\frac{1}{\regfoll}\nabla_{\statefoll}\Psi$, is shown.
		On the right, it is the solution of~\eqref{Eq:MOOP02}.\label{Fig:Foll}
	}
\end{figure}
In Figure~\ref{Fig:Duals}, the solutions of the adjoint equations, that are related to the optimization of the leader, are plotted.
The strong relation to the followers' initial condition is visible on the left.
On the right, we observe a similar shape as the evolution of the followers' state.
Those states are due to the followers' state influence in the source terms of the adjoint equations.
\begin{figure}[]
	\begin{center}
		\includegraphics[width=0.7\textwidth]{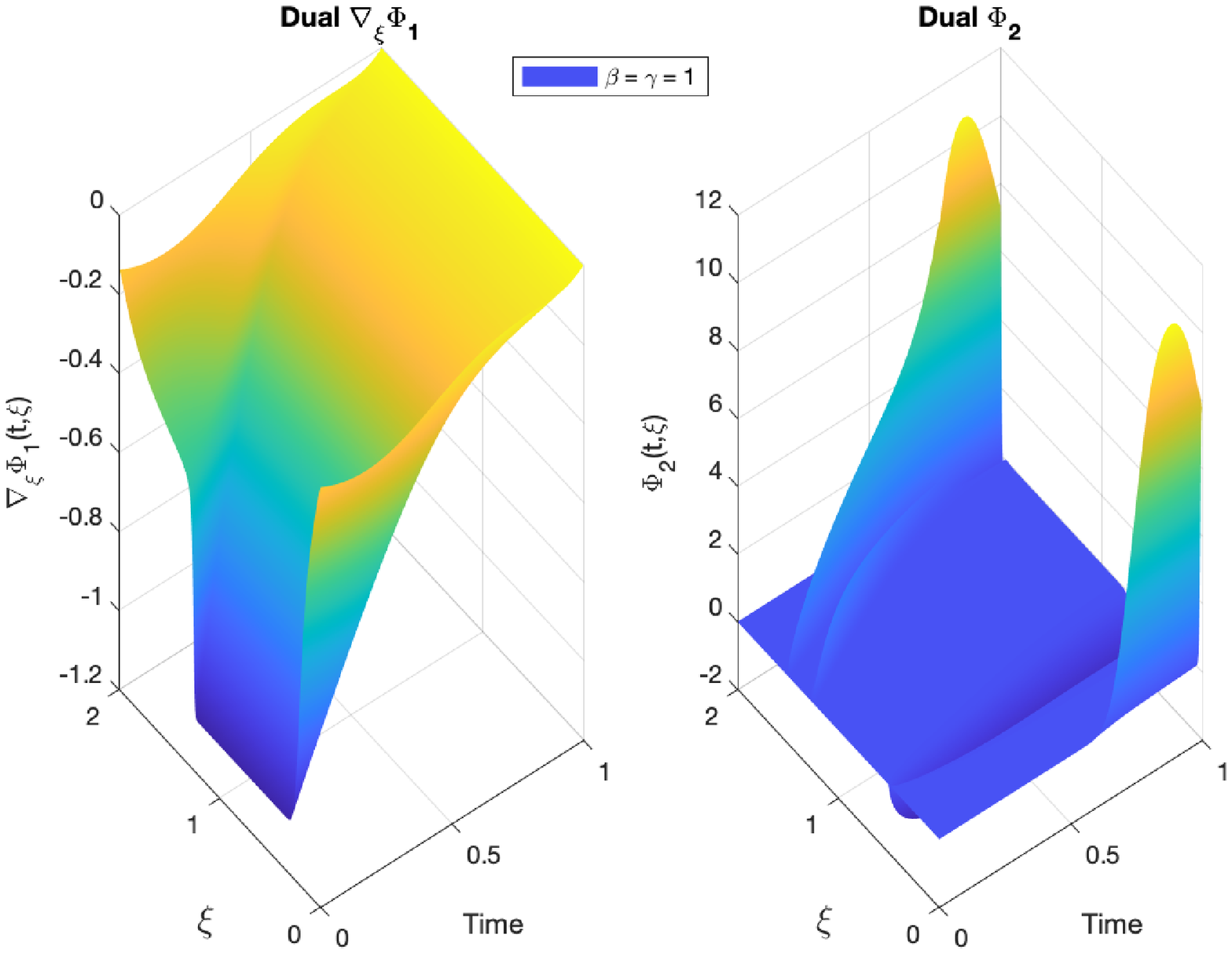}
	\end{center}
	
	\caption{Time evolution of the dual states, on the left is the solution of Equation~\eqref{Eq:MOOP03}, on the right of~\eqref{Eq:MOOP04}. \label{Fig:Duals}}
\end{figure}

\section{Outlook}

In this article a Stackelberg game with  possibly infinitely many of followers is discussed.
The formal mean-field limit is derived for the Stackelberg game and consistency conditions are established.
Then a numerical method for the full optimality system and numerical results are presented.

Future steps include the extension of the Stackelberg of multiple leaders in order to enable accurate analysis of energy markets and tolling in vehicular traffic models.
Also comparisons to particle simulations of the model are subject to further research.
From the mathematical perspective, it is interesting, if Theorem~\ref{Th:MainRes} can be shown also rigorously.

\appendix
\section{Optimality Systems}

If the OOM approach is followed, we get as optimality system of \eqref{Eq:Summary2}  for $i=1,\dots,N$:
\begin{align}\label{Eq:Summary5}
\begin{split}
\dot{\statelead}_i& = \frac{1}{N}\suml_{j=1}^N						G(\statelead_i,\statelead_j,v)\\ 
\dot{\theta}_i& = - \frac{1}{N}\suml_{j=1}^N \left[\D_1 G(\statelead_i,\statelead_j,v) \theta_i +\D_2 G(\statelead_j,\statelead_i,v) \theta_j\right] +\begin{bmatrix}
\D_{{\statefoll}_i}\tilde{m}(\statefoll_i)\\0
\end{bmatrix}^\top\nabla_m\objlead(\crtllead,m(\vec{\statefoll}))\\
0&=\nabla_\crtllead \objlead(\crtllead,m\left(\vec{\statefoll}\right))+\reglead\crtllead-\frac{1}{N^2}\suml_{i=1}^N\suml_{j=1}^N \D_\crtllead G(\statelead_i,\statelead_j,\crtllead)\theta_i
\end{split}
\end{align}
Computing the mean field behavior yields the following equations with $\fOOM=\fOOM(t,\statelead,\theta)$:
\begin{align}\label{Eq:Summary8}
\begin{split}
0=&\partial_t\fOOM+\div_\statelead
\left(
\fOOM
\int G(\statelead,\hat{\statelead},\crtllead)\fOOMhat\,\d\hat{ \statelead}\,\d\hat{ \theta}
\right)\\
&-\div_\theta
\left(
\fOOM
\int \left[\D_1G(\statelead,\hat{\statelead},\crtllead)\theta+\D_2G(\hat{\statelead},\statelead,\crtllead)\hat{\theta}\right]
\fOOMhat\,\d\hat{ \statelead}\,\d\hat{ \theta}
\right)\\
& +\begin{bmatrix}
\D_{{\statefoll}}\tilde{m}(\statefoll)^\top\nabla_m\objlead(\crtllead,m_{\fOOM}(t))\\0
\end{bmatrix}^\top \nabla_\theta\fOOM\\
0=&\nabla_\crtllead \objlead(\crtllead,m\left(\vec{\statefoll}\right))+\reglead\crtllead-\int \D_\crtllead G(\statelead,\hat{\statelead},\crtllead)\theta\fOOM\fOOMhat\,\d\hat{\statelead}\,\d\hat{\theta}\,\d{\statelead}\,\d{\theta}
\end{split}
\end{align}
Where we specify
\begingroup
\allowdisplaybreaks
\begin{align*}
	\D_1G(\statelead,\hat{\statelead},\crtllead)=&\begin{bmatrix}
	\D_\statefoll\left[\dynfoll(\statefoll,\hat{ \statefoll})(\hat{ \statefoll}-\statefoll)   \right]& \begin{matrix}
	-\frac{1}{\regfoll}& &\\& \ddots &\\ & &-\frac{1}{\regfoll}
	\end{matrix}\\
		\D_\statefoll G_2(\statelead,\hat{\statelead},\crtllead)
	&	-\D_\statefoll\left[\dynfoll(\statefoll,\hat{ \statefoll})(\hat{ \statefoll}-\statefoll)   \right]
	\end{bmatrix}\\
	\D_\statefoll G_2(\statelead,\hat{\statelead},\crtllead) = & -\D^2_{{\statefoll}}\tilde{m}(\statefoll)^\top\nabla_m\objfoll(\tilde{m}(\statefoll),\crtllead) \\&-\left(\D_{{\statefoll}}\tilde{m}(\statefoll)\right)^2\nabla^2_m\objfoll(\tilde{m}(\statefoll),\crtllead)
	-\D^2_\statefoll\left[\dynfoll(\statefoll,\hat{ \statefoll})(\hat{ \statefoll}-\statefoll)   \right] \psi
	\\
		\D_2G(\statelead,\hat{\statelead},\crtllead)=& \begin{bmatrix}
		\D_{\hat{ \statefoll}}\left[\dynfoll(\statefoll,\hat{ \statefoll})(\hat{ \statefoll}-\statefoll)   \right]&\begin{matrix}
		0& &\\& \ddots &\\ & &0
		\end{matrix}\\
		-\D^2_{\hat{ \statefoll}}\left[\dynfoll(\hat{ \statefoll},{ \statefoll})({ \statefoll}-\hat{ \statefoll})   \right]\hat{ \psi}&-\D_{\hat{ \statefoll}}\left[\dynfoll(\hat{ \statefoll},{ \statefoll})({ \statefoll}-\hat{ \statefoll})   \right]
		\end{bmatrix}\\	
		\D_vG(\statelead,\hat{\statelead},\crtllead)=&\begin{bmatrix}
		\begin{matrix}
		0& &\\& \ddots &\\ & &0
		\end{matrix}\\
		-\D_\statefoll\tilde{m}(\statefoll)^\top \D_v\left[\nabla_m\objfoll(\tilde{m}(\statefoll),\crtllead)\right]
		\end{bmatrix}
\end{align*}
\endgroup
If in contrast, the OMO approach is chosen, the optimality system to  \eqref{Eq:Summary4} reads with $\fOMO=\fOMO(t,{\statelead})$ and $\Theta=\Theta(t,\statelead)$:
\begin{align}\label{Eq:Summary6}
\begin{split}
0=&\partial_t\fOMO+\div_\statelead \left(\fOMO \int G(\statelead,\hat{\statelead},v) \fOMOhat \,\d\hat{\statelead}   \right)\\
0=&\partial_t \Theta+\int\left[ G(\statelead,\hat{\statelead},v)^\top \nabla_\statelead\Theta+ G(\hat{\statelead},\statelead,v)^\top \nabla_{\hat{\statelead}}\hat{\Theta} \right]\fOMOhat\,\d\hat{\statelead}\\
&-\nabla_m\objlead(\crtllead,m_{\fOMO}(t))^\top\tilde{m}(\statefoll)-\frac{\reglead}{2}\crtllead^2\\
\crtllead(t)=&\frac{1}{\reglead} \left[ -\nabla_\crtllead\objlead(
\crtllead,m_{\fOMO}(t)) +\int \D_\crtllead G(\statelead,\hat{\statelead},\crtllead)\nabla_\statelead\Theta \fOMOhat\fOMO\,\d\hat{\statelead}\,\d\statelead\right]
\end{split}
\end{align}

\section*{Acknowledgments}

This work was supported by the DFG under Grant STE2063/2-1 and HE5386/19-1. 

\bibliographystyle{plain}
\bibliography{bibs/FiniteDimensionalGames.bib,bibs/General.bib,bibs/MeanField.bib}

\end{document}